\theoremstyle{plain}
\newtheorem{theorem}{Theorem}[section]
\newtheorem{cor}[theorem]{Corollary}
\newtheorem{proposition}[theorem]{Proposition}
\newtheorem{lemma}[theorem]{Lemma}
\newtheorem*{mainquestion*}{Main Question}
\newtheorem{dfn}[theorem]{Definition}
\newtheorem{conjecture}[theorem]{Conjecture} 
\newtheorem*{claim*}{Claim}
\theoremstyle{definition}
\newtheorem{defn}[theorem]{Definition}
\newtheorem{remark}[theorem]{Remark}
\newtheorem{example}[theorem]{Example}  
\newcommand{\Hth}{\mathbb{H}^3}
\newcommand{\Stw}{\mathbb{S}^2}
\newcommand{\Sth}{\mathbb{S}^3}
\newcommand{\Stwo}{\mathbb{S}^2}
\newcommand{\SN}{\mathbb{S}}
\newcommand{\SoneXStwo}{\SN^1 \times \SN^2}
\newcommand{\TT}{\mathcal{T}}
\newcommand{\Z}{\mathbb{Z}}
\newcommand{\CC}{\mathbb{C}}
\newcommand{\RR}{\mathbb{R}}
\newcommand{\Q}{\mathbb{Q}}
\newcommand{\PSLTC}{\text{PSL}(2, \mathbb{C})}
\newcommand{\PSLTF}{\text{PSL}(2, \mathbb{F})}
\newcommand{\Fp}{\mathbb{F}_p}
\newcommand{\PSLTFp}{\text{PSL}(2, \Fp)}
\newcommand{\PSLTFpSq}{\text{PSL}(2, \mathbb{F}_{p^2})}
\newcommand{\PSL}{\text{PSL}}
\newcommand{\PSLTR}{\text{PSL}(2, \mathbb{R})}
\newcommand{\SLTC}{\text{SL}(2, \mathbb{C})}
\newcommand{\mat}[4]{\left(\begin{matrix} #1 & #2 \\ #3 & #4 \end{matrix}\right)}
\definecolor{lesspsychedelicpurple}{rgb}{0.7, 0.1, .7}
\definecolor{bettergreen}{rgb}{0,0.6,0.4}
\definecolor{whatevs}{rgb}{0.1,0.7,0.4}
\begin{document}

\title{Small $\PSLTF$ representations of Seifert fiber space groups}
\dedicatory {In celebration of Alan Reid's 60th birthday.}
\author{Neil R Hoffman and Kathleen L Petersen}

\begin{abstract}
Let $M$ be a Seifert fiber space with non-abelian fundamental group and admitting a triangulation with $t$ tetrahedra. We show that there is a non-abelian $\PSLTF$ quotient where $|\mathbb F| < c(2^{20t}3^{120t})$ for an absolute constant $c>0$ and use this to show  that the lens space recognition problem lies in coNP for  Seifert fiber space input. We end with a discussion of our results in the context of distinguishing lens spaces from other $3$--manifolds more generally.  \end{abstract}

\maketitle

\section{Introduction}

Fundamental groups of surfaces and $3$--manifolds admit $\PSLTC$ representations that provide valuable tools for studying these low dimensional spaces. For example, hyperbolic 
$3$--manifolds admit discrete, faithful $\PSLTC$ representations. More generally, hyperbolic manifolds and many Seifert fiber spaces admit non-abelian (in particular non-trivial) 
representations into  $\PSLTC$.   This paper will focus on Seifert fiber spaces with infinite fundamental group.  We show that  these manifolds can be distinguished from the $3$--
sphere (and other lens spaces) via algorithms that also produce small certificates. The {\sc Lens space recognition}  problem is the problem of deciding if a given $3$--manifold is a 
lens space (including $\Sth$). A decision problem is said to lie in NP if an affirmative solution can be verified via certificate in polynomial time relative to the input size (of a triangulation 
in this case) and we say that a problem lies in coNP if a negative solution can be verified by such a certificate. That is,  the {\sc Lens space recognition} problem lies in coNP if given a 
manifold $M$ that is not a lens space, there is a certificate (for example, an explicit homomorphism to a non-cyclic group that can be written down from a triangulation of $M$) which 
can be checked in polynomial time. 

\begin{conjecture}\label{conj:s3andLpq}
 Let $M$ be a closed $3$--manifold.
\begin{enumerate}
\item {\sc $\mathbb{S}^3$ recognition} lies in coNP.
\item {\sc Lens space recognition} lies in coNP.
\end{enumerate}
\end{conjecture}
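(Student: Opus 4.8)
The plan is to reduce each topological recognition problem to detecting a group-theoretic property of $\pi_1(M)$ and then to certify that property by a homomorphism into a \emph{small} finite group $\PSLTF$. By the elliptization and geometrization theorems, a closed orientable $3$--manifold $M$ is $\Sth$ exactly when $\pi_1(M)$ is trivial, and (under the convention that $\Sth$ and $\SoneXStwo$ count as lens spaces) $M$ is a lens space exactly when $\pi_1(M)$ is cyclic. Hence, to certify that $M$ is \emph{not} $\Sth$ it suffices to produce a nontrivial finite quotient of $\pi_1(M)$, and to certify that $M$ is \emph{not} a lens space it suffices to produce a non-cyclic finite quotient. Since every quotient of a cyclic group is cyclic, a homomorphism $\rho \colon \pi_1(M) \to \PSLTF$ whose image is non-abelian is automatically such a certificate for the second problem, and one whose image is nontrivial works for the first.

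For the verification, recall that from a triangulation with $t$ tetrahedra one computes in polynomial time a presentation of $\pi_1(M)$ with $O(t)$ generators and $O(t)$ relators of bounded length. The certificate consists of the images $\rho(x_i) \in \PSLTF$ of the generators, written as matrices over $\mathbb F$. A verifier (i) evaluates each relator and checks that it equals the identity of $\PSLTF$, and (ii) checks that two prescribed generators have non-commuting images (for the lens space problem) or that some generator is nontrivial (for the $\Sth$ problem; a nontrivial abelian quotient, detected through $H_1(M)$, also suffices unless $M$ is a homology sphere, in which case the non-abelian image is again what is needed). Each step is a bounded product of $2 \times 2$ matrices over $\mathbb F$, so the verification runs in time polynomial in $t$ and $\log|\mathbb F|$; the certificate is thus of polynomial size precisely when $\log|\mathbb F|$ is polynomial in $t$.

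Both parts of the conjecture therefore follow from a single existence statement: whenever $M$ is not a lens space (resp.\ not $\Sth$) there is a representation to $\PSLTF$ with non-abelian (resp.\ nontrivial) image and $|\mathbb F|$ at most single-exponential in $t$. For Seifert fiber spaces with non-abelian fundamental group this is exactly the content of our main theorem, which supplies such an $\mathbb F$ with $|\mathbb F| < c(2^{20t}3^{120t})$; then $\log|\mathbb F| = O(t)$ and the certificate is linear in the input, placing these instances of both problems in coNP.

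The main obstacle is establishing this existence statement for \emph{all} closed $3$--manifolds. The natural strategy is to start from a $\PSLTC$ representation --- for instance the holonomy of a hyperbolic piece, or a representation assembled along a geometric decomposition --- defined over a number field, and to reduce it modulo a suitable prime to land in $\PSLTF$. The difficulty is effective: one must bound the degree and height of the number field, and the residue characteristic, in terms of $t$, so that the resulting field stays single-exponential in size while the image remains non-abelian. Controlling this arithmetic uniformly is what keeps the general conjecture open. The Seifert case is comparatively tractable because $\pi_1(M)$ is a central extension of a spherical, Euclidean, or hyperbolic $2$--orbifold group, whose non-abelian quotients are explicit enough to be realized over a provably small field; extending such effective control to hyperbolic and mixed manifolds is the principal remaining challenge.
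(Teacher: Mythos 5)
The statement you are trying to prove is stated in the paper as a \emph{conjecture}, and the paper does not prove it; it only establishes the special case where $M$ is a Seifert fiber space with non-abelian fundamental group (Theorem~\ref{main_thm}). Your write-up is an accurate account of the paper's overall strategy --- reduce recognition to a group-theoretic property of $\pi_1(M)$ via geometrization, certify it by a homomorphism onto a non-abelian (or non-trivial) subgroup of $\PSLTF$ with $\log|\mathbb F|$ polynomial in $t$, and verify the relators by matrix multiplication --- but it is not a proof of the conjecture. The entire mathematical content of the conjecture is the existence statement you isolate in your third and fourth paragraphs: that \emph{every} closed $3$--manifold that is not $\Sth$ (resp.\ not a lens space) admits such a quotient over a field of single-exponential size. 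You explicitly concede that this is not established outside the Seifert-fibered case, so the argument terminates exactly where the difficulty begins.

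Concretely, the gap is not merely one of bookkeeping. For a hyperbolic integral homology sphere, Zentner's theorem produces a non-trivial $\SLTC$ representation, but the known reduction of such a representation modulo a small prime (Kuperberg, Zentner) is conditional on the Generalized Riemann Hypothesis; no unconditional bound on the residue characteristic in terms of $t$ is available, which is precisely why the paper leaves part (1) open and remarks that it suffices to treat hyperbolic integral homology spheres. Your proposal also silently assumes that a non-trivial $\pi_1(M)$ always has a non-trivial finite quotient of controlled size; residual finiteness of $3$--manifold groups gives existence of \emph{some} finite quotient, but with no effective bound. Finally, a small point of convention: the paper excludes $\SoneXStwo$ from the lens spaces (it requires $p\neq 0$), so ``$M$ is a lens space iff $\pi_1(M)$ is cyclic'' should read ``finite cyclic,'' with the infinite cyclic case handled separately by the homology certificate $H_1(M)\cong\Z$ --- a repair you gesture at but should make explicit.
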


There are no non-orientable $3$--manifolds with finite fundamental groups.  
As we discuss in Section~\ref{sect:background}, there is a polynomial time algorithm (relative to the size of the triangulation) to determine if a triangulation represents an orientable or non-orientable $3$--manifold. Therefore, we can distinguish non-orientable $3$--manifolds from lens spaces (including $\Sth$) in polynomial time. 
  As a result, the subsequent arguments of this paper concentrate on the case where $M$ is a closed, connected and orientable $3$--manifold. Then weaving these two threads together, our main theorem addresses an important subclass of closed and connected $3$--manifolds, the Seifert fiber spaces.

\begin{theorem}\label{main_thm} For a Seifert fiber space $M$ with non-abelian fundamental group, the {\sc Lens space recognition}  problem lies in coNP.  In particular,  there is a polynomial time verifiable certificate to distinguish $M$ from $\Sth$.
\end{theorem}

As all of our certificates are either (non-trivial) non-abelian representations or non-cyclic abelian representations, we also distinguish these manifolds from $\SoneXStwo$ and so we state the following direct corollary.

\begin{cor}\label{cor:SoneXStwo} For a Seifert fiber space $M$ with non-abelian fundamental group, the {\sc $\SoneXStwo $ recognition}  problem lies in coNP.  
\end{cor}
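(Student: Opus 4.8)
The plan is to reuse, essentially verbatim, the certificate produced in the proof of Theorem~\ref{main_thm}. The key observation is that $\pi_1(\SoneXStwo) \cong \Z$, since $\Stw$ is simply connected, and that every homomorphic image of a cyclic group is again cyclic. Consequently no representation of $\pi_1(\SoneXStwo)$ can have non-cyclic image, whether abelian or not, and it is exactly this feature of the certificate that we will exploit.

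First I would recall what Theorem~\ref{main_thm} actually delivers. For a Seifert fiber space $M$ with non-abelian fundamental group and a triangulation with $t$ tetrahedra, its proof exhibits a homomorphism $\rho \colon \pi_1(M) \to G$, where $G$ is either $\PSLTF$ with $|\mathbb{F}|$ bounded as in the abstract, or a finite abelian group, and in every case the image $\rho(\pi_1(M))$ is \emph{non-cyclic}: it is either non-abelian or a non-cyclic abelian group. The data of $\rho$ (the images of a generating set read off from the triangulation), together with a witness that the image is non-cyclic---a pair of non-commuting elements, or two elements generating a non-cyclic abelian subgroup---can be recorded and checked in time polynomial in $t$, exactly as in the proof of the main theorem.

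Then I would argue that this same certificate settles the $\SoneXStwo$ recognition problem. Suppose, for contradiction, that $M \cong \SoneXStwo$. Then $\pi_1(M) \cong \Z$ would be cyclic, so every homomorphic image of $\pi_1(M)$ would be cyclic, contradicting the non-cyclic image of $\rho$. Hence $M \not\cong \SoneXStwo$, and $\rho$ is a valid, polynomial-time verifiable certificate of this fact. Since the hypothesis on $M$ guarantees that it is never $\SoneXStwo$, every input instance is a ``no'' instance and is certified in this way, which is precisely the assertion that the problem lies in coNP.

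I do not expect a genuine obstacle here: the entire difficulty is already absorbed into constructing and bounding the representation in Theorem~\ref{main_thm}. The only additional ingredient is the elementary group-theoretic fact that a non-cyclic image rules out the cyclic group $\Z = \pi_1(\SoneXStwo)$, which is what makes this a direct corollary rather than a separate construction. If anything requires care, it is merely to confirm that in the case where the main theorem uses a non-cyclic abelian certificate the same polynomial-time check applies; but this is immediate, since exhibiting two independent elements of the image is as easily verified as exhibiting two non-commuting ones.
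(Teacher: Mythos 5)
Your argument is correct and is essentially the paper's own: the paper derives the corollary in one line by noting that all of its certificates are non-abelian representations or non-cyclic abelian representations, and since $\pi_1(\SoneXStwo)\cong\Z$ is cyclic every homomorphic image is cyclic, so the same certificate rules out $\SoneXStwo$. The only slight imprecision is that in the toroidal (non-small) case the certificate is an essential surface rather than a representation, but the cited result there already distinguishes such manifolds from all spaces with cyclic fundamental group, so the conclusion is unaffected.
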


The {\sc $\mathbb{S}^3$ recognition} problem lies in NP by work of Schleimer \cite{schleimer2011sphere}. This work was later extended by Lackenby and Schleimer  \cite{lackenby2022recognising} to show that the {\sc lens space recognition} problem lies in NP as part of a larger recognition scheme for manifolds with finite fundamental group. In light of  that result, determining if the problem also lies in coNP takes on special significance as it would provide another example of a decision problem in NP$\cap$coNP.

Zentner \cite[Theorem 11.2]{zentner2018integer} proved that  the {\sc $\mathbb{S}^3$ recognition}  problem is in coNP provided the Generalized Riemann Hypothesis (GRH) is true. His result is part of a larger investigation which exhibits non-trivial $\SLTC$ representations of integral homology spheres. After showing that any integral homology sphere with non-trivial fundamental group has a non-trivial $\SLTC$ representation \cite[Theorem 9.4]{zentner2018integer}, Zentner employs similar methods to those of Kuperberg \cite{kuperberg2014knottedness}. Conditional upon the truth of the GRH, Kuperberg established that {\sc unknot recognition} lies in coNP.  More recently, Lackenby \cite{lackenby2021efficient} removed the reliance on GRH and  proved that  {\sc unknot recognition} lies in coNP. It was previously known by work of Hass, Lagarias and Pippenger \cite{hass1999computational} that {\sc unknot recognition} lies in NP. Combining these results, it is now known that {\sc unknot recognition} lies in NP$\cap$coNP, and so it is natural to ask if {\sc unknot recognition} lies in P, the set of polynomial time algorithms. In service of this goal, Lackenby has also announced that there exists a sub-exponential time algorithm for {\sc unknot recognition}.

The goal this paper is to cut down the scope of the problem of distinguishing manifolds from lens spaces. By focusing on Seifert fiber spaces, we can remove the dependence on GRH, but not surprisingly, our methods do employ tools from number theory (Linnik's theorem). In particular, our main technique is to similar to those of Kuperberg and Zentner, in that we  show there is a sufficiently small finite  field $\mathbb F$ and a non-abelian representation of $\pi_1(M)$ into $\PSLTF$. These finite fields arise as quotients of number fields by a prime ideal.  We are able to establish unconditional results because the number fields we are working with are (nearly) cyclotomic, where current unconditional methods in number theory are sufficient.

\subsection{Organization and Outline} 

We now present a brief overview of our proof strategy. 
\begin{itemize}
\item[$\circ$] Using \cite{HarawayHoffman2019complexity} and an understanding of Seifert fiber spaces and their groups, we reduce the scope of the problem to distinguishing small, prime Seifert fiber spaces with non-cyclic fundamental groups from lens spaces.  In Section~\ref{sect:background} we provide background for computational complexity in this $3$--manifold setting and discuss distinguishing non-orientable manifolds and orientable manifolds. (As will be discussed below, the only non-prime Seifert fiber space is $\mathbb{RP}^3 \# \mathbb{RP}^3$. Since its fundamental group is isomorphic to $\Z/2\Z * \Z/2\Z$, it  surjects the dihedral group of order $6$ (also known as $\PSL(2,2))$. To satisfy the more general claim in the abstract, we point out that for any $c\geq 1$ and $t\geq 1$, $2 < c(2^{20t} 3^{120t})$.)  
\item[$\circ$]  In Section~\ref{sect:triangleReps}, we begin by discussing background information about Seifert fiber spaces. 
The small, prime, non-cyclic Seifert fiber space groups surject triangle groups $T_{n_1,n_2,n_3}$ where $n_k>1$ are integers for $k=1,2,3$.  We  demonstrate that for most of these triangle groups there is a particularly nice integral representation into $\mathrm{PSL}(2,K)$ where $K$ is a number field which is ``almost'' the cyclotomic field $\Q(\zeta_{2n_1n_2n_3})$.  The trace field of this representation has degree $\tfrac12\phi(2n_1n_2n_3)$. 
\item[$\circ$]  We then use Linnik's theorem to find a ``small'' prime that splits completely in $K$ and show that in the natural quotient the representation stays non-abelian. This gives us a non-abelian representation of these Seifert fiber space groups into $\mathrm{PSL}(2,\mathbb F)$ where $|\mathbb F|$ is bounded above by a polynomial function of $n_1n_2n_3$. (This covers most cases, and for the remaining cases we get a compatible bound.)  
\item[$\circ$]  Section~\ref{sect:degreebounds} deals with converting this upper bound to an upper bound in terms of $t$, the number of tetrahedra in a triangulation.  We accomplish this in two main steps.  
First, we show that for any $3$--manifold with a triangulation with $t$ tetrahedra, there is a presentation for $\pi_1(M)$  where the number of generators, relations, and their length is governed by $t$. 
Then we translate those complexity bounds for $\pi_1(M)$ into upper bounds for the degree of the trace field of a $0$-dimensional component of the $\PSLTC$ character variety in terms of $t$.  
\item[$\circ$]  In Section~\ref{sect:sfs_distinguish}, we reconcile these bounds. Our explicit representations of the triangle groups have trace fields of degree $\tfrac12\phi(2n_1n_2n_3)$ and we use this to translate our upper  bounds for $|\mathbb F|$ from a dependence on $n_1n_2n_3$ to a dependence on $t$, as needed. (This plan covers most cases, and we handle the remaining cases separately.)   We conclude with a discussion of the general problem of distinguishing $3$--manifolds from lens spaces. 
\end{itemize}


\subsection{Acknowledgements}{Both authors thank the Oklahoma State Number Theory group and members of the Oklahoma State, University of Arkansas and University of Oklahoma Topology groups for feedback on an early versions of this work. The first author is support by  Simons Foundation grant \#524123 to Neil Hoffman.} Both authors want to acknowledge the influence our common graduate advisor Alan Reid on the occasion of his 60th birthday. That this paper applies tools from number theory to topological problems is no accident. Moreover, we greatly appreciate his inspiration and continuing encouragement throughout our careers. Finally, we also thank him for feedback on early version of this paper.

\section{Background}\label{sect:background}


Our input is a triangulation.  Specifically,  a triangulation $T$ is a collection of $t$ tetrahedra and (exactly) $2t$ face pairings. For simplicity, we will assume a face pairing is a permutation in $S_4$, $0123 \rightarrow abcd$ with $a,b,c,d \in \{0,1,2,3\}$. 
Implicitly, we will enforce  the standard conditions necessary to ensure $T$ is the triangulation of a closed $3$--manifold (without boundary): each vertex link is a $2$--sphere, all edges are glued to themselves consistently, each face is glued to a distinct face, etc. To stress that we are using triangulated manifolds, we will often just refer our given triangulation as $M$ since  our methods  apply to any triangulation of the manifold.  
While there may exist slightly more efficient encoding schemes for triangulations, the scheme above is sufficient for the methods of this paper and the size of input needed to exhibit a triangulation is on the order of $t \log(t)$. 

Two tetrahedra $A$ and $B$ glued across a face have the same local orientation if the ordered set of edges $\{A(01),A(02),A(03)\}$ and the ordered set of edges $\{B(01),B(02),B(03)\}$ either both satisfy the right-hand or both satisfy the left-hand rule (after fixing an ordered basis on the tangent space for one point in the interior of $A$). This occurs if and only if the face pairing between $A$ and $B$ is orientation reversing, which occurs when the permutation lies in $S_4 \setminus A_4$.  We say a triangulation (and manifold) is \emph{orientable} if the local orientations agree for each tetrahedron in the triangulation and a triangulation is \emph{non-orientable} otherwise.

\subsection{Reduction to the orientable case}

We will now discuss orientation issues, so that we can safely ignore them for the rest of the paper.

\begin{proposition}\label{prop:nonorientability}
There is a polynomial $p$  such that given any triangulated $3$--manifold $M$ with $t$ tetrahedra in the triangulation, the number of steps needed to determine if the triangulation is orientable or non-orientable is at most $p(t)$.
\end{proposition}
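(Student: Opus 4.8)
The plan is to recast orientability as a $\ZthZ$-consistency (2-colouring) problem on the dual graph of the triangulation and to solve that problem with a single graph traversal. Form the dual graph $G$ whose vertices are the $t$ tetrahedra and whose $2t$ edges are the face pairings; assembling $G$ from the face-pairing data is immediate and costs $O(t)$ steps. To each tetrahedron $A$ assign a variable $x_A\in\ZthZ$ recording whether we keep or reverse its default local orientation (the one coming from the vertex labelling $0123$). By the criterion recalled just before the statement, the tetrahedra $A$ and $B$ meeting along a face pairing $f$ carry the same local orientation exactly when the underlying permutation $\pi_f$ is odd, i.e.\ $\pi_f\in S_4\setminus A_4$; after allowing the reversals this compatibility condition becomes a single linear equation
\[
x_A + x_B \equiv \epsilon_f \pmod 2, \qquad f=(A,B)\in E(G),
\]
where the constant $\epsilon_f\in\ZthZ$ is read off in $O(1)$ time from the parity of $\pi_f$. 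The triangulation is orientable if and only if this system of $|E(G)|=2t$ equations is consistent.

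First I would fix a root tetrahedron, set its variable to $0$, and run a breadth-first (or depth-first) search on $G$, propagating the value of $x$ across each traversed edge by the rule forced by the displayed equation. Since $M$ is connected the graph $G$ is connected and one search reaches every tetrahedron (for a possibly disconnected input one repeats the search on each component). Whenever the search crosses a face pairing it performs the $O(1)$ parity test on $\pi_f$ and either records the forced value of $x$ at a newly discovered tetrahedron or, if that tetrahedron already holds a value, checks the equation for a contradiction. The algorithm reports ``non-orientable'' the first time a contradiction appears and ``orientable'' if the search terminates with none.

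Correctness is the conceptual heart of the argument and the step I would be most careful about: fixing $x$ along a spanning tree of $G$ determines the values uniquely, and the whole system is solvable precisely when every remaining non-tree edge is automatically satisfied, which is exactly the consistency check performed by the traversal. Granting this, the running time is governed by $|V(G)|+|E(G)| = t+2t = 3t$ elementary operations, each of cost $O(1)$ (or $O(\log t)$ if one charges for indexing the tetrahedra), so the number of steps is bounded by a polynomial $p(t)=O(t\log t)$. The graph-search complexity is routine; the only point demanding attention is the bookkeeping that faithfully translates the combinatorial notion of a coherent orientation into the $\ZthZ$-linear system above.
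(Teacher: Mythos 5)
Your proposal is correct and follows essentially the same route as the paper: both arguments build a spanning tree in the dual graph of the triangulation, propagate a local orientation from a root tetrahedron across the tree edges, and then test each remaining (non-tree) face pairing for consistency, yielding an $O(t\log t)$ bound. Your recasting of the consistency check as a $\ZthZ$-linear system is a cosmetic reformulation of the same algorithm rather than a different method.
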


The astute reader will notice that the solution presented below is not optimal in that we actually iterate over a tree twice. However, we still obtain a polynomial bound here.

\begin{proof}
Choose a maximal spanning tree $\Omega$ in the dual 1-skeleton $\hat{M}^{(1)}$. Since $\hat{M}^{(1)}$ has $2t$ edges and $t$ vertices, building this tree takes on the order of $t \log (t)$ steps using standard techniques in graph theory such as Kruskal's algorithm (see \cite{Kruskalspanningtrees} and \cite[Chapter 23]{cormen2022introduction} for further background).  Each generator $g_i$ of $\pi_1(M)$ corresponds to an edge $e_i$ in $\hat{M}^{(1)} \setminus \Omega$.  Choose a base point $x$ to be a vertex in $\Omega$ and record an orientation of the  corresponding tetrahedron in the triangulation by recording the labelling of the vertices. For each neighbor $y$ of $x$ in $\Omega$, we can choose an orientation reversing face pairing to induce a relabelling of the vertices of the tetrahedron corresponding to $y$. Repeat this process for adjacent vertices until all tetrahedra are labelled. Each edge $e_i$ in $\hat{M}^{(1)} \setminus \Omega$  corresponds to a generator of the fundamental group of $M$ since it extends uniquely to a loop in $\hat{M}^{(1)}$, and also carries with it an induced face pairing map.  The manifold $M$ is orientable if this induced face pairing is orientation reversing for all $i$ (which requires checking $t+1$ gluings) and non-orientable if for any $i$ the face pairing is orientation preserving (which requires checking at most $t+1$ gluings). That the number of steps is polynomially bounded (roughly on the order of $t \log(t)$) directly follows.
%
\end{proof}


Since there are no non-orientable $3$--manifolds with cyclic (or more generally finite) fundamental group, Proposition~\ref{prop:nonorientability} provides a polynomial time algorithm to differentiate non-orientable $3$--manifolds from lens spaces.

\subsection{Storing finite groups}\label{sub:storing_finite_groups}
We begin with some basic facts about the storing finite groups and the computations necessary to perform operations. A main focus of this paper is take an input (usually a triangulation with $t$ tetrahedra) and produce a polynomially-sized certificate (to show the manifold is not a lens space). 

Assume that $G$ is a finitely presented group and denote by $G^{ab}$ the abelianization of $G$ and $Tor(G^{ab})$ the torsion subgroup.  We begin by connecting the rank of $G^{ab}$ and order of $Tor(G^{ab})$ to the structure of a presentation for $G$.  Group presentations with 
relations of bounded length as below will naturally arise in the context of triangulations. We make the following observation from a 
first homology computation  using linear algebra.

Let $G$ be a finitely presented group with $g$ generators $x_k$ for $k=1, \dots, g$ and  $r$ relations $w_j=w_j(x_1, \dots, x_g)$ for $j=1,\dots, r$ each of  length at most $l$. Denote by $|w_j|_{x_k}$ the exponent sum of the generator $x_k$ in the relation $w_j$.  Create a homogeneous 
linear system of equations using $g$ variables $y_1, \dots,  y_g$ where the relation $w_j$  determines the equation $\sum_{i=1}^g |
w_j|_{x_i} y_i =0$. Let $A$ be the matrix with coefficients $a_{ij}=|w_j|_{x_i} $.   Denote by   $N$  the Smith normal form of $A$.  The abelianization of $G$ can be determined from    $N$  as follows. Let $N_{sq}$ be the square minor of $N$ such that  $n_{ii}$ is non-zero, so that  $rank(N) = rank(N_{sq})$. It follows that  $G^{ab} \cong \Z^{g-rank(N)} \oplus \Z/n_{ii}\Z$, where $\Z/1\Z$ represents the trivial group,   and 
\[ |Tor(G^{ab})|=\prod_{i=1}^{rank(N)} n_{ii} = \det(N_{sq}).\] 

We now bound  $\det(N_{sq})$. First let $A_{sq}$ be a square minor of $A$ with determinant of largest magnitude. As the computation of $N$ from $A$ only uses elementary matrices $|\det(N_{sq})|\leq |\det(A_{sq})|$.  Using Hadamard's inequality, we have that 
\[ |\det(N_{sq})|^2\leq |\det(A_{sq})|^2\leq \prod_{i=1} ^{rank(N)} \| a_j\|^2\] where $\|a_j\|$ is the norm of the jth row of  $A_{sq}$. Our assumption that the word lengths are bounded by $l$ implies that $ \| a_j\| \leq l$, so  that $\det(N_{sq})^2 \leq l^{2r}$.  
 
We now summarize  many of the key observations from the above argument:

\begin{proposition}\label{prop:homology}
 Let $G$ be a group with $g$ generators and $r$ relations each of length at most $l$. Then $G^{ab}$ has rank at most $g$ and $|Tor(G^{ab})|\leq l^r$.   
\end{proposition}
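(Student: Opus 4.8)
The plan is to realize $G^{ab}$ as the cokernel of the abelianized relation matrix and read off both invariants from its Smith normal form, following the linear-algebra setup in the discussion above. Let $A$ be the $g\times r$ integer matrix with $a_{ij}=|w_j|_{x_i}$, regarded as a map $\Z^r\to\Z^g$; since abelianizing a relator records only its total exponent vector, $G^{ab}$ is isomorphic to the cokernel of $A$. A quotient of $\Z^g$ has free rank at most $g$, which gives the rank bound immediately.

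For the torsion I would diagonalize $A$ over $\Z$ as a Smith normal form $N=UAV$ with $U,V$ unimodular. By the structure theorem the torsion subgroup of $\operatorname{coker}(A)$ is the direct sum of the cyclic groups $\Z/n_{ii}\Z$ over the nonzero diagonal entries of $N$, so
\[
|Tor(G^{ab})|=\prod_i n_{ii}=|\det(N_{sq})|,
\]
where $N_{sq}$ is the maximal nonsingular diagonal block. The problem is thereby reduced to bounding $|\det(N_{sq})|$.

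To carry out that bound I would invoke two facts. First, $U$ and $V$ are products of elementary matrices of determinant $\pm1$, so they cannot increase the magnitude of a maximal minor; hence $|\det(N_{sq})|\le|\det(A_{sq})|$ for an equally sized square submatrix $A_{sq}$ of $A$. Second, Hadamard's inequality bounds $|\det(A_{sq})|$ by the product of the Euclidean norms of its columns. The decisive estimate is that each column of $A$ is the vector of signed exponent sums of a single relation $w_j$, so its $\ell^1$-norm is at most the number of letters of $w_j$ and hence at most $l$; passing to the submatrix only shortens these vectors, so every column of $A_{sq}$ has Euclidean norm at most $l$. Since $A_{sq}$ has $rank(A)\le r$ columns, Hadamard gives $|\det(A_{sq})|\le l^{\,r}$, and chaining the inequalities yields $|Tor(G^{ab})|\le l^{\,r}$.

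There is no deep obstacle here: the argument is an application of the structure theorem for finitely generated abelian groups together with Hadamard's inequality. The only points requiring a moment's care are verifying that the unimodular reduction really leaves the dominant minor's magnitude unchanged (so that $N_{sq}$ is controlled by a genuine minor $A_{sq}$ of the same size) and noting that cancellation among letters can only lower a column's $\ell^1$-norm below the raw letter count; both are routine. The one mild convention is treating $l\ge 1$, so that $l^{rank(A)}\le l^{\,r}$, which holds in any nontrivial case.
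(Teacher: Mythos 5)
Your proposal is correct and follows essentially the same route as the paper: identify $|Tor(G^{ab})|$ with the determinant of the nonsingular block of the Smith normal form of the abelianized relation matrix, observe that unimodular reduction does not increase the relevant minor, and apply Hadamard's inequality together with the observation that each relation's exponent vector has norm at most $l$. No further comment is needed.
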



%


For our purposes, the  key takeaway from Proposition~\ref{prop:homology} is that the number of relations and their length determines a linear upper bound on $\log(|Tor(G^{ab})|)$ in terms of $r$, when $l$ is a constant. In particular, the bit-size needed store torsion elements in a homology group is bounded by a linear function. As noted above, the existence of free abelian quotients can also be quickly computed and exhibited. This  implies that we can verify that $G$ admits any abelian quotient that is necessarily a subgroup of $G^{ab}$ in polynomial-time.

We can also apply similar methods in accounting to representations into $\PSLTFp$ and $\PSLTFpSq$. If we bound $p$ by a polynomial, then we can perform the operations of $+,-,\cdot$ in $\Fp$ or $\mathbb F_{p^2}$ in polynomial-time.  This ensures  we can perform operations with (cosets of) $2\times 2$ matrices with entries in $\Fp$ or $\mathbb F_{p^2}$ in polynomial-time. As a result of Perelman's affirmative solution to the geometrization conjecture, lens spaces are the the only $3$--manifolds with finite cyclic fundamental group. Therefore, if $H_1(M)$ is not cyclic, we can distinguish $M$ from a lens space via a simple homology computation.

We now give an example of the kind of methods we will employ later in the paper.

\begin{example}
Consider the following presentation for the fundamental  group of the figure-8 knot complement $G= \langle a, b \mid  aba^{-1}b^{-1}a=baba^{-1}b^{-1} \rangle$.   Identify $\mathbb{F}_{25}$ with the quotient $ \mathbb{F}_5[x]/(x^2+1)$.  We define a homomorphism from $G$ to  $\text{PSL}(2,\mathbb{F}_{25})$ by 
\[ a \rightarrow  \pm \mat{x}{0}{0}{-x}, \quad b \rightarrow \pm  \mat{x}{-x}{0}{-x}.\]  
The image of $G$ is isomorphic to $D_{10}$, the dihedral group of order 10.

In order to produce a certificate that this is valid representation, we first need to be able to store elements in $\mathbb{F}_{25}$.  We write each element as a two dimensional vector $\mu_0+\mu_1x$ where  $\mu_0,\mu_1 \in \mathbb{F}_5$. It takes $\log_2(5-1)$ bits to encode $\{0,1,2,3,4\}$. Hence, encoding elements in  $\mathbb{F}_{25}$ takes twice that number of bits and encoding in $\mathbb{F}_{5^d}$ takes $d$ times the number of bits as it takes to encode an element in $\mathbb{F}_5$. More generally,  a $2\times 2$ matrix in  $\mathbb{F}_{p^d}$ takes $4d(\log_2(p-1))$ bits to encode. 

After encoding the matrices (which are the images of $a,b$ in $\mathbb{F}_{25}$), we then check that the relation is satisfied. In our case, the relation is length 10, so we need to preform the equivalent of $10$ matrix multiplications in $\text{PSL}(2,\mathbb{F}_{25})$ to check that this is a valid representation.  We point out that the number of generators and relations of the group need to have size bounded by our input as they are part of the certificate. 

We can also certify that this representation is non-abelian by observing that $ab$ and $ba$ have distinct images in $\text{PSL}(2,\mathbb{F}_{25})$.

In summary, the certificate for this representation is composed of the image of each one of the generators as a $2 \times 2$ matrix in $\mathbb{F}_{25}$ and a check that the relation (or more generally relations) of the group are satisfied by our choice of generators. We can certify that this, and any, representation is non-abelian by showing that the images of two specified generators do not commute. 
\end{example}

The natural generalization of the above example leads to the following proposition, which takes a finitely presented group as input. 
We say the {\em size} of presentation is the length of a string that includes all the generators and relations.

%
%


\begin{proposition}\label{prop:cert_non_abelian}
Let $G$ be a finitely presented group of size at most $n$ admitting a non-trivial representation, $\rho: G \rightarrow \PSLTFpSq$. There is a polynomial $f_1$ such that If $\log(p) < f_1(n)$, then there exists a second polynomial $f_2$, such that  the number of (bit-wise) computations needed to certify that $\rho(G)$ is a non-trivial subgroup of  $\PSLTFpSq$ is at most $f_2(n)$.  Moreover, if $\rho(G)$ is a non-abelian representation, then there is a third polynomial $f_3$, such that we can certify that $\rho(G)$ has a pair of non-commuting generators in at most $f_3(n)$ computations. 

\end{proposition}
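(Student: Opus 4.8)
The plan is to carry out, in full generality, the accounting already illustrated by the figure-8 example, tracking every cost as a function of the presentation size $n$ and the field size. Since the size of the presentation is at most $n$, the number of generators $g$, the number of relations $r$, and the length $l$ of each relation are all at most $n$. The certificate will consist of the $g$ matrices $\rho(x_1),\dots,\rho(x_g)$, each recorded as a coset of a $2\times 2$ matrix with entries in $\mathbb F_{p^2}$. Following the example, each such matrix is stored in $8\log_2(p-1)$ bits; taking $f_1$ to be any fixed polynomial and assuming $\log p< f_1(n)$, this bit-size — and hence the total length $g\cdot 8\log_2(p-1)$ of the list of generator images — is bounded by a polynomial in $n$.

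Next I would bound the cost of a single group operation. Each of $+,-,\cdot$ in $\mathbb F_{p^2}\cong \mathbb F_p[x]/(x^2-\alpha)$ reduces to a fixed number of operations in $\mathbb F_p$, each costing a polynomial in $\log p$ and hence in $n$. Multiplying two $2\times 2$ matrices uses only a constant number of such field operations, so a single multiplication in $\PSLTFpSq$, together with the reduction modulo the sign ambiguity of $\PSL$ and a test of equality of cosets, costs a polynomial in $n$. To certify that $\rho$ is a homomorphism (so that $\rho(G)$ is genuinely a subgroup) I would evaluate each relation word $w_j(\rho(x_1),\dots,\rho(x_g))$ and check that it equals $\pm I$; each word needs at most $l-1\le n$ matrix multiplications, and there are $r\le n$ of them, for a total of at most $n^2$ matrix multiplications. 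Certifying non-triviality then amounts to exhibiting a single generator $x_k$ with $\rho(x_k)\ne \pm I$, a single coset comparison. Multiplying the $O(n^2)$ matrix multiplications by the per-operation polynomial cost produces the polynomial $f_2$.

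For the non-abelian case, the key observation is that $\rho(G)$ is generated by $\rho(x_1),\dots,\rho(x_g)$, so if all of these pairwise commuted in $\PSLTFpSq$ the image would be abelian. Hence a non-abelian image forces some pair $x_i,x_j$ with $\rho(x_i)\rho(x_j)\ne \pm\rho(x_j)\rho(x_i)$, and this inequality is exactly the certificate of non-commutativity. To find such a pair I would loop over all $\binom{g}{2}\le n^2$ pairs, computing and comparing the two products for each; this is $O(n^2)$ matrix multiplications and coset comparisons, each polynomial in $n$, which yields $f_3$.

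I expect no serious obstacle, since the argument is essentially bookkeeping; the only points demanding care are working consistently in $\PSL$ (all equalities being of cosets, i.e.\ up to sign) and the elementary but essential observation in the previous paragraph that non-commutativity of the whole image is already witnessed at the level of the generators.
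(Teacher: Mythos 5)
Your proposal is correct and is essentially the paper's intended argument: the paper gives no separate proof of this proposition but derives it as "the natural generalization" of the figure-8 example, which is exactly the bookkeeping you carry out (encode the generator images in $O(\log p)$ bits each, evaluate each relation word by at most $n$ matrix multiplications over $\mathbb F_{p^2}$, certify non-triviality by a generator with image $\ne \pm I$, and certify non-abelianness by a pair of generators whose images fail to commute as cosets). Your observation that non-commutativity of the image must already be witnessed on a pair of generators is the same elementary point the paper relies on implicitly.
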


In our applications, we will bound the number of generators and relations, while also keeping the relations of bounded size which implicitly bounds the size of a presentation for $G$. We also point out that a non-trivial $\PSLTFp$ representation determines a non-trivial $\PSLTFpSq$ representation by inclusion.

\section{Triangle groups and other quotients of $3$--manifold groups}\label{sect:triangleReps}

For the remainder of this paper, we assume that reader is familiar with $3$--manifold topology especially the taxonomy provided in \cite{scott1983geometries}.   We now provide definitions of the some of the relevant $3$--manifolds discussed in this paper.

A \emph{lens space} is a space obtained by gluing two solid tori $T_1,T_2$ along their boundaries such that the curve $p[\lambda_1]+q[\mu_1]$ ($p,q$ relatively prime) is identified with $[\mu_0]$ where $\mu_i$ is a curve that bounds a disk in the solid torus $T_i$ and $\lambda_1$ is isotopic to the core of $T_1$. In keeping with the standard conventions, we  denote this space by $L(p,q)$.  We keep the convention that a lens space has finite fundamental group  ie that $p\ne 0$. Our arguments also distinguish $\SoneXStwo$ from other other Seifert fiber spaces.
 We stress that $L(1,q) \cong \Sth$   is a valid lens space and our arguments that distinguish other manifolds from lens spaces  distinguish $\Sth$ from other manifolds as well.

 \begin{dfn}\label{def:triangle}
For $k=1,2,3$ let  $n_k>1$ be  integers. We define the triangle group
\[
T_{n_1,n_2,n_3} = \langle x, y \mid x^{n_1}, y^{n_2}, (xy)^{n_3} \rangle. 
\]
The triangle group $T_{n_1,n_2,n_3}$ is said to be hyperbolic if $\tfrac{1}{n_1}+\tfrac{1}{n_2}+\tfrac{1}{n_3}<1$, Euclidean if the sum equals $1$, and elliptic if the sum is greater than one. We will use the convention that $n_1\leq n_2\leq n_3$ and use $\ell$ to denote the quantity $2 \mathrm{lcm}(n_1,n_2,n_3)$. 
\end{dfn}

We refer the reader to \cite{scott1983geometries} and \cite{jaco1980lectures} for  background information on $3$--manifolds and Seifert fiber spaces in particular.
A Seifert fiber space $M$ is a $3$--manifold with a decomposition of $M$ into disjoint circles. In light of Proposition \ref{prop:nonorientability}, we will only consider orientable Seifert fiber spaces.   A $3$--manifold $M$ is \emph{prime} if for every $M\cong M_1 \# M_2$, either $M_1 \cong \Sth$ or $M_2 \cong \Sth$. The only orientable, non-prime Seifert fiber space is $\mathbb{RP}^3\# \mathbb{RP}^3$.    We say the Seifert fiber space $M$ is \emph{non-cyclic} if $\pi_1(M)$ is (non-trivial and) non-cyclic.   The set of orientable Seifert fiber spaces with cyclic fundamental group consists of  $\SoneXStwo$ and lens spaces. 
 In particular, if $M$ is non-cyclic, then $M \not\cong \Sth$. In summary, \emph{the set of orientable prime, non-cyclic Seifert fiber spaces includes all Seifert fiber spaces except $\mathbb{RP}^3\# \mathbb{RP}^3$, $\SoneXStwo$ and lens spaces}. Unless underscoring for clarity, we consider only orientable $M$ in our discussion of Seifert fiber spaces. 

The $3$--manifolds $\mathbb{RP}^3\# \mathbb{RP}^3$ and  $\SoneXStwo$ can be distinguished from lens spaces via homology in polynomial time, so our subsequent treatment in Section~\ref{sect:sfs_distinguish} to show that the lens space recognition problem is in coNP applies to these special cases. In addition, we can distinguish $\SoneXStwo$ and $\mathbb{RP}^3\# \mathbb{RP}^3$ from prime, non-cyclic Seifert fiber spaces and from one another via polynomial time verifiable certificates (see \cite[Theorem 3]{ivanov2008}).

A $3$--manifold is called \emph{small} if it does not contain a closed essential surface.  Any $3$--manifold that is not small can be distinguished from a lens space using the essential surface as the certificate, which can be verified in polynomial time as we discuss in Section~\ref{sect:sfs_distinguish}.    Therefore, we focus on small Seifert fiber spaces, which  can be characterized by their fundamental groups.  
{\em Elliptic} $3$--manifolds are those (orientable, geometric $3$--manifolds) which have finite fundamental group.  

Let $M$ be an orientable, small, prime, non-cyclic  Seifert fiber space with base orbifold  $B$.  Since $M$ is small and in particular atoroidal, it follows from Jaco and Shalen's work that $B$ is either an $\Stw$ with  three cone points  or $B$ is $\mathbb{RP}^2$ with one cone point (see \cite[IV.2.5 and Lemma IV.2.6]{JacoShalen1979}). In the second case, $M$ also admits a Seifert fibration over $S^2(2,2,p)$  (for example see \cite[p. 356]{scott1983geometries}).   To see that $M$ being small implies the bound on exceptional fibers, assume the genus is positive or there are more than three cone points.  Then there is an embedded curve in the base orbifold such that it and all of its powers are homotopically non-trivial. (In the case that the underlying space of $B$ is $\Stwo$, this curve is separating and has a least two cone points on either side of it.)  The lift of this curve into $M$ does not intersect the exceptional fibers, ensuring that it is an incompressible torus.  As such $M$ would not be small. Since $M$ is orientable, if the base orbifold were $\Sth$ with 0, 1, or 2 cone points then $M$ would be either a lens space or  $\SoneXStwo$.  Therefore, $M$ has  base orbifold of the form $B=\Stw(n_1,n_2,n_3)$ for  integers $n_k\geq 2$.
The fundamental group $\pi_1(M)$ surjects $\pi_1^{orb}(B)$ (see \cite[Lemma 3.2]{scott1983geometries} for example). Since $\pi_1^{orb}(\Stw(n_1,n_2,n_3)) \cong T_{n_1,n_2,n_3}$, we conclude that $\pi_1(M)$ surjects $T_{n_1,n_2,n_3}$.

For a small, prime, non-cyclic Seifert fiber space $M$, if $M$ is elliptic,  the base orbifold  $B$  is elliptic.  
If $M$ is not elliptic, then $B$ is  hyperbolic  or Euclidean.  This coincides with the terminology in Definition~\ref{def:triangle} for the associated triangle groups.  We now make this connection explicit (see \cite{scott1983geometries}, also  \cite{scott1978Torus} for further background).
The elliptic triangle groups are the finite triangle groups.  These are the $T_{n_1,n_2,n_3}$  associated to triples $(n_1,n_2,n_3)$ equal to  $(2,3,3),(2,3,4),$  $(2,3,5)$, and $(2,2,m)$ for $m\geq 2$.  
 If $M$ is a non-cyclic elliptic Seifert fiber space, then $\pi_1(M)$ surjects (at least) one of these elliptic triangle groups.

The Euclidean and hyperbolic triangle groups are infinite, and so $B$ and also $M$ are infinite as well.  
The Euclidean triangle groups are those associated to the triples $(2,4,4)$, $(2,3,6)$, and $(3,3,3)$ and manifolds with Euclidean or Nil geometry surject these triangle groups. Finally, if the triangle group is hyperbolic, the Seifert fiber space  coincides with $\widetilde{\PSLTR}$ or $\mathbb H^2 \times \RR$ geometries.

%
%
%
%
%


This discussion can be summarized by the following proposition.

\begin{proposition}\label{prop:surjectstriangle}
Let $M$ be an orientable, prime, non-cyclic Seifert fiber space.  Then either 
$M$ contains an embedded, essential torus or $M$ is a small Seifert fiber space  and $\pi_1(M)$  surjects a 
 triangle group.  
\end{proposition}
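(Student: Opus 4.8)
The plan is to establish the second alternative under the assumption that the first fails: I assume that $M$ contains no embedded essential torus and then show that $M$ is small and that $\pi_1(M)$ surjects a triangle group. The whole argument is organized around the base orbifold $B$ of the Seifert fibration of $M$, so the first step is to observe that an orientable Seifert fiber space is atoroidal exactly when $B$ carries no essential simple closed curve. Indeed, any essential simple closed curve $\gamma$ in $B$ missing the cone points lifts to a vertical torus in $M$ (here orientability of $M$ excludes the Klein bottle possibility), and since $\gamma$ and all of its powers are homotopically nontrivial this vertical torus is incompressible and not boundary-parallel, hence essential. Thus as soon as $B$ admits such a $\gamma$ we land in the first alternative.

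With that reduction in hand, I would invoke the classification of Jaco and Shalen \cite{JacoShalen1979}: for an atoroidal orientable Seifert fiber space the base orbifold must be $\Stw$ with at most three cone points or $\mathbb{RP}^2$ with at most one cone point, and such an $M$ is small. The $\mathbb{RP}^2$ case is folded into the spherical case by the standard refibering of a Seifert fibration over $\mathbb{RP}^2(p)$ as one over $\Stw(2,2,p)$. I expect this to be the main obstacle, in that it is the single genuinely topological input of the proof: it is where the hypothesis ``no essential torus'' is converted into a rigid constraint on $B$, and where one must check that the curve-lifting construction above really does account for every essential curve that a large base orbifold (positive genus, or four or more cone points) would supply.

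It then remains only to discard the degenerate spherical bases. Because $M$ is orientable and prime, the non-prime possibility $\mathbb{RP}^3 \# \mathbb{RP}^3$ is excluded, and if $B$ were $\Stw$ with zero, one, or two cone points then $M$ would be a lens space or $\SoneXStwo$, each having cyclic fundamental group and so contradicting non-cyclicity. Hence $B = \Stw(n_1,n_2,n_3)$ with every $n_k \geq 2$. Finally, using that $\pi_1(M)$ surjects the orbifold fundamental group $\pi_1^{\mathrm{orb}}(B)$ \cite{scott1983geometries} together with the identification $\pi_1^{\mathrm{orb}}(\Stw(n_1,n_2,n_3)) \cong T_{n_1,n_2,n_3}$, I conclude that $\pi_1(M)$ surjects the triangle group $T_{n_1,n_2,n_3}$, which completes the second alternative.
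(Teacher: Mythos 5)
Your argument follows the paper's proof essentially verbatim: the same curve-lifting construction of vertical essential tori to constrain the base orbifold, the same appeal to Jaco--Shalen for the atoroidal case, the same refibering of $\mathbb{RP}^2(p)$ as $\Stw(2,2,p)$, the same exclusion of the degenerate bases via orientability, primeness and non-cyclicity, and the same conclusion via $\pi_1(M) \twoheadrightarrow \pi_1^{\mathrm{orb}}(B) \cong T_{n_1,n_2,n_3}$. One small caution: the biconditional ``atoroidal exactly when $B$ carries no essential simple closed curve'' overstates what you actually prove (horizontal tori in Euler-number-zero fibrations over Euclidean triangle orbifolds would break the converse), but since you only invoke the direction you establish, nothing in the proof is affected.
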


 As  will be discussed later, this allows us to consider three cases of prime, non-cyclic Seifert fiber spaces: $M$ is not small, $\pi_1(M)$ surjects a hyperbolic triangle group, and $\pi_1(M)$ surjects a non-hyperbolic triangle group.   We first focus on the case when $M$ is a small Seifert fiber space, specifically when $\pi_1(M)$ surjects a hyperbolic triangle group.  We will  reconcile the remaining cases later in Proposition \ref{prop:non_abelian}.

\subsection{$\PSLTR$ Representations of Triangle Groups}
 We now give an explicit  integral $\PSLTR$ representation of a hyperbolic triangle group.   A feature of this representation is that it is contained in a number field of minimal or near minimal possible degree.

For a set of integers $S = \{s_i\}_{i\in I}$, we say the \emph{greatest common divisor of $S$}, $\mathrm{gcd}(S)$, is the greatest integer $d$ such that $d | s_i$ for all  $i \in I$. For a fixed integer $n\geq 1$  let $\zeta_{n}=exp(2\pi i/n)$ and define $c_{n}=\cos(2\pi/n)$  so that $2c_{n} = \zeta_{n}+\zeta_{n}^{-1}$.  We will use values of the form $\cos(2\pi/2m)$ instead of $\cos(2\pi/m)$ in our construction as it would lead to an element of order 2 being central otherwise.   

We now prove a lemma which will be useful later. 
\begin{lemma}\label{lemma:Tsurjects}
If  $d_k$ divides $n_k$ for $k=1,2,3$ then $T_{n_1,n_2,n_3}$ surjects $T_{d_1,d_2,d_3}$.  In particular,  $T_{n_1,n_2,n_3}$ surjects $\Z/d\Z \times \Z/d \Z$ where $d=\gcd(n_1,n_2,n_3)$.
\end{lemma}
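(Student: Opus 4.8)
The plan is to prove both statements by exhibiting explicit surjections induced by the standard generators. For the first claim, consider the triangle group
\[
T_{n_1,n_2,n_3} = \langle x, y \mid x^{n_1}, y^{n_2}, (xy)^{n_3} \rangle
\]
and the target $T_{d_1,d_2,d_3} = \langle \bar{x}, \bar{y} \mid \bar{x}^{d_1}, \bar{y}^{d_2}, (\bar{x}\bar{y})^{d_3} \rangle$. First I would define a map on generators by $x \mapsto \bar{x}$ and $y \mapsto \bar{y}$ and verify that this extends to a well-defined group homomorphism. By von Dyck's theorem (the universal property of groups given by generators and relations), it suffices to check that the images of the three relators of $T_{n_1,n_2,n_3}$ are trivial in $T_{d_1,d_2,d_3}$. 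Since $d_k \mid n_k$, we may write $n_k = d_k m_k$ for integers $m_k \geq 1$; then $\bar{x}^{n_1} = (\bar{x}^{d_1})^{m_1} = 1$, and similarly $\bar{y}^{n_2} = 1$ and $(\bar{x}\bar{y})^{n_3} = ((\bar{x}\bar{y})^{d_3})^{m_3} = 1$. Thus all three relations are satisfied, the homomorphism is well-defined, and it is surjective because $\bar{x}, \bar{y}$ generate the target.

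For the second claim, set $d = \gcd(n_1,n_2,n_3)$, so that $d \mid n_k$ for each $k$. Applying the first part with $d_1 = d_2 = d_3 = d$ gives a surjection $T_{n_1,n_2,n_3} \twoheadrightarrow T_{d,d,d}$, so it remains to produce a surjection $T_{d,d,d} \twoheadrightarrow \Z/d\Z \times \Z/d\Z$. The natural candidate is the abelianization map followed by identification: sending $\bar{x}$ to a generator of the first factor and $\bar{y}$ to a generator of the second. I would define $\phi\colon T_{d,d,d} \to \Z/d\Z \times \Z/d\Z$ by $\bar{x} \mapsto (1,0)$ and $\bar{y} \mapsto (0,1)$, and again invoke von Dyck's theorem: $\bar{x}^d \mapsto (d,0) = (0,0)$, $\bar{y}^d \mapsto (0,d)=(0,0)$, and $(\bar{x}\bar{y})^d \mapsto d\cdot(1,1) = (0,0)$, so all relators die. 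Surjectivity is clear since $(1,0)$ and $(0,1)$ generate the product.

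I do not anticipate a genuine obstacle here; the result is essentially formal, following from the universal property of presentations. The only point requiring a small amount of care is the verification that the third relation $(\bar{x}\bar{y})^{n_3}$ maps to the identity in both steps — this is immediate in the first part from $d_3 \mid n_3$, and in the second part from the fact that in an abelian target the image of $\bar{x}\bar{y}$ has order dividing $d$. Composing the two surjections yields the stated map $T_{n_1,n_2,n_3} \twoheadrightarrow \Z/d\Z \times \Z/d\Z$, completing the proof.
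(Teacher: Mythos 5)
Your proposal is correct and follows essentially the same route as the paper: the map $x \mapsto \bar{x}$, $y \mapsto \bar{y}$ verified via von Dyck's theorem, followed by the observation that $T_{d,d,d}$ abelianizes to $\Z/d\Z \times \Z/d\Z$. You simply spell out the relator checks that the paper leaves implicit.
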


\begin{proof}
The surjection from $T_{n_1,n_2,n_3} $ as above to $T_{d_1,d_2,d_3} = \langle \xi, \eta \mid \xi^{d_1}, \eta^{d_2}, (\xi \eta)^{d_3} \rangle$ is given by $x\mapsto \xi$ and $y \mapsto \eta$.  Therefore, $T_{n_1,n_2,n_3}$ surjects $T_{d,d,d}$ whose abelianization is $\Z/d\Z \times \Z/d \Z$.
\end{proof}

We now collect some useful concepts.   Let $G$ be a subgroup of $ \PSLTC$ (or $\SLTC$).   We call a number field $K$  the {\em field of definition}  of $G$ if $G < \text{PSL}(2,K)$  and there is no number field $L\subset K$ such that $G < \text{PSL}(2,L)$. The {\em trace field} of   $G$  is  $\Q( \text{tr}(g): g \in G)$ and is a number field  in many known cases including when $G$ is (the image of) a triangle group, and when $\Hth/G$ is a finite volume $3$--manifold.  If the number field $K$ is the field of definition of $G$, then the trace field is a subfield of $K$.   For any number field $K$, we use $\mathcal O_K$ to denote the ring of integers in $K$.  We say the group $G$ is {\em rigid}  (in $\PSLTC$) if all discrete and faithful representations of $G$ into $\PSLTC$ are conjugate (in $\text{Isom}(\Hth)$).   Therefore, for a rigid group the trace fields of all discrete and faithful representations are the same up to complex conjugation, and therefore identical for real fields. Alan Reid pointed out to us that representations similar to those given below were studied by Waterman and Maclachlan in \cite{MR766224}. The  difference between the constructions is that our $M_n$ is the inverse of their $A$ and $C$ and we choose specific conjugations so that  our matrix entries are algebraic integers.
 
\begin{defn}\label{defn:varrho}
Let $n_k\geq 2$ be integers for $k=1,2,3$ and let $r$ be a root of \[ r^2+ 2r(c_{2n_1}-c_{2n_2})+2(1-2c_{2n_1}c_{2n_2}-c_{2n_3}).\]  For any integer $n>1$ define
\[ 
M_n= \pm \mat{2c_{2n}}{1}{-1}{0}    \ \text{ and } \  T_r= \pm \mat{1}{r}{0}{1}. 
\]
Further, define $\varrho: T_{n_1,n_2,n_3} \rightarrow \PSLTC$ by  $x\mapsto M_{n_1} ,  \ y\mapsto T_r M_{n_2}T_r^{-1}$.
\end{defn}
We will show in Proposition~\ref{prop:trianglePSL}  that this is a well-defined $\PSLTR$ representation. 
Explicitly, this definition gives 
\[
\varrho(x) =  \pm \mat{2c_{2n_1}}{1}{-1}{0},  \quad \varrho(y) = \pm \mat{2c_{2n_2}-r}{r^2-2rc_{2n_2}+1 }{-1}{r},     \]
and 
\[
\varrho(xy) =  \pm \mat{-2rc_{2n_1}+4 c_{2n_1} c_{2n_2}-1}{2r^2c_{2n_1}-4rc_{2n_1}c_{2n_2}+ r+2c_{2n_1}}{r-2c_{2n_2}}{-r^2+2rc_{2n_2} -1}.
\]
To prove Proposition~\ref{prop:onlycyclicabelians}, and show that modulo a well-chosen prime ideal this is non-abelian the following will be useful as well, 
\[
\varrho(yx) = \pm \mat{4c_{2n_1}c_{2n_2}-2 r c_{2n_1} -r^2+2r c_{2n_2}-1}{2c_{2n_2}-r}{-2c_{2n_1}-r}{-1}.
\]

We now show that this is a discrete and faithful $\PSLTR$ representation  and we compute the field of definition and trace field  for this representation.

\begin{figure}
\begin{center}
\includegraphics[width=3in]{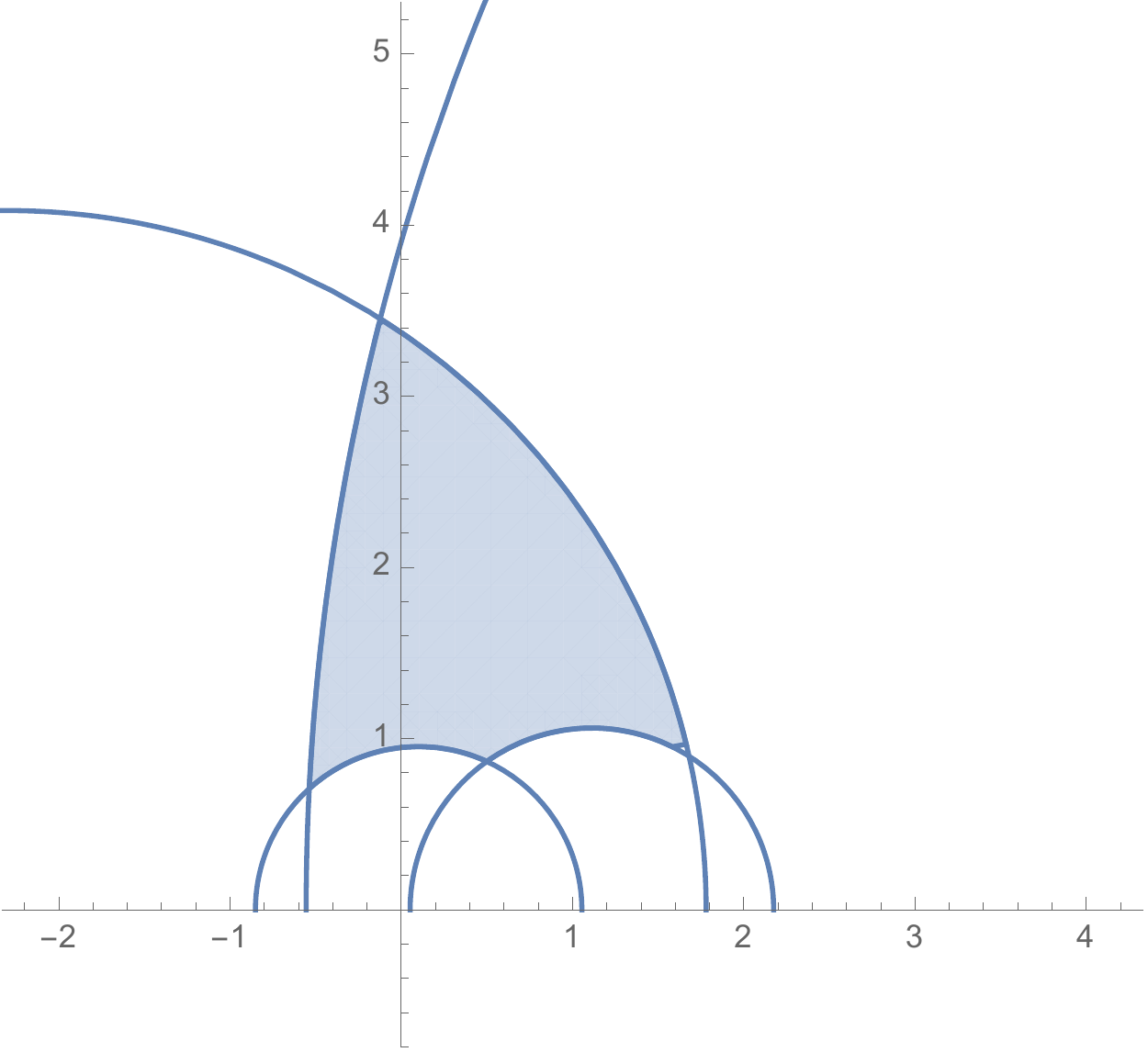}
\caption{\label{fig_triangle_proof} A fundamental domain for $T_{3,4,5}$ with the above construction.}
\end{center}
\end{figure}

\begin{proposition}\label{prop:trianglePSL} 
Let  $n_1\leq n_2 \leq n_3$ be positive integers with $\frac{1}{n_1}+\frac{1}{n_2}+\frac{1}{n_3} < 1$ and let  $\ell =2\thinspace\mathrm{lcm}(n_1,n_2,n_3).$
The following hold, where $K=\Q(c_{\ell},r)$  the totally real subfield of  $\Q(\zeta_{\ell}, r)$   
\begin{enumerate}
\item $\varrho$ is a rigid, discrete and  faithful representation of  $T_{n_1,n_2,n_3}$ into $\text{PSL}(2,\mathcal O_K)$.   
\item $K$ is the field of definition of $\varrho(T_{n_1,n_2,n_3})$. 
\item $\Q(c_{\ell})$ is the trace field of $\varrho(T_{n_1,n_2,n_3})$. 
\item $[K:\Q]$ is  $ \phi(\ell )/2$ or $\phi(\ell)$ depending on whether $r\in \Q(c_{\ell})$ or not.
\item The $\PSLTC$ character variety for $T_{n_1,n_2,n_3}$ is a finite set of points.
 \end{enumerate}
 
\end{proposition}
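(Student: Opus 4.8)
My plan is to dispatch well-definedness and integrality first, then the geometric core (discreteness, faithfulness, rigidity), and finally the field and character-variety statements. To see $\varrho$ is a homomorphism I would check that the three relators die in $\PSLTC$. Since $\varrho(x)=M_{n_1}$ has trace $2c_{2n_1}=2\cos(\pi/n_1)$ and determinant $1$, its eigenvalues are $e^{\pm i\pi/n_1}$, so $\varrho(x)^{n_1}=\pm I$; as $\varrho(y)$ is conjugate to $M_{n_2}$, likewise $\varrho(y)^{n_2}=\pm I$. The only relation requiring work is $(xy)^{n_3}$: the quadratic defining $r$ is built precisely so that substituting $r^2+2r(c_{2n_1}-c_{2n_2})=-2(1-2c_{2n_1}c_{2n_2}-c_{2n_3})$ into the displayed matrix for $\varrho(xy)$ collapses its trace to $-2c_{2n_3}$, whence $\varrho(xy)$ has order $n_3$. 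For integrality I would note $2c_{2n_k}=\zeta_{2n_k}+\zeta_{2n_k}^{-1}\in\mathcal O_K$, that the monic quadratic for $r$ has coefficients $2(c_{2n_1}-c_{2n_2})$ and $2(1-2c_{2n_1}c_{2n_2}-c_{2n_3})$ lying in $\mathcal O_K$ so that $r\in\mathcal O_K$, and that every displayed entry is an integer polynomial in $\{2c_{2n_k},r\}$; hence $\varrho$ maps into $\text{PSL}(2,\mathcal O_K)$.

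The main obstacle is the claim in (1) that $\varrho$ is discrete and faithful (and rigid); the rest is comparatively routine. Here I would argue geometrically. First I would check $r\in\mathbb R$ by verifying the discriminant of its quadratic is nonnegative when $\tfrac1{n_1}+\tfrac1{n_2}+\tfrac1{n_3}<1$ — this positivity is the hyperbolic law of cosines guaranteeing that a triangle with angles $\pi/n_k$ exists — so that $\varrho$ lands in $\PSLTR$. Next I would identify $\varrho(x),\varrho(y),\varrho(xy)$ as elliptic rotations of orders $n_1,n_2,n_3$ about the vertices of a hyperbolic triangle with interior angles $\pi/n_1,\pi/n_2,\pi/n_3$, the conjugator $T_r$ serving to place the second fixed point at the correct distance from the first. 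Poincaré's polygon theorem, applied to (the double of) this triangle, then shows $\langle\varrho(x),\varrho(y)\rangle$ is discrete with the triangle as fundamental domain and equals the orientation-preserving von Dyck group, whose presentation is exactly that of $T_{n_1,n_2,n_3}$; as $\varrho$ surjects this group and the two groups share a presentation, $\varrho$ is an isomorphism onto its image, hence faithful. Rigidity follows since a hyperbolic triangle is determined up to isometry by its angles, so any discrete faithful representation yields a congruent triangle and thus a conjugate representation in $\text{Isom}(\mathbb H^2)$.

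For the field statements I would invoke the classical fact that, for a two-generator subgroup $\langle A,B\rangle<\SLTC$, every word-trace is an integer polynomial in $\text{tr}(A),\text{tr}(B),\text{tr}(AB)$. Using the $\SLTC$ lift given by the displayed matrices, $\text{tr}(\varrho(x))=2c_{2n_1}$, $\text{tr}(\varrho(y))=2c_{2n_2}$, and $\text{tr}(\varrho(xy))=-2c_{2n_3}$, so the trace field is $\Q(c_{2n_1},c_{2n_2},c_{2n_3})$. Since $2n_k\mid\ell$ and $\mathrm{lcm}(2n_1,2n_2,2n_3)=\ell$, the compositum of the $\Q(\zeta_{2n_k})$ is $\Q(\zeta_\ell)$, whose maximal real subfield is $\Q(c_\ell)$; this gives $\Q(c_{2n_1},c_{2n_2},c_{2n_3})=\Q(c_\ell)$ and proves (3). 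For (2), the entries of $\varrho(x)$ and $\varrho(y)$ generate $c_{2n_1},c_{2n_2},r$ over $\Q$, and the quadratic expresses $c_{2n_3}$ as a polynomial in these, so the generated field is $\Q(c_\ell,r)=K$; thus no proper subfield of $K$ contains $\varrho(T_{n_1,n_2,n_3})$, giving the field of definition. Part (4) then follows from $[\Q(c_\ell):\Q]=\phi(\ell)/2$ and the fact that $r$ satisfies a quadratic over $\Q(c_\ell)$, so $[K:\Q(c_\ell)]$ is $1$ or $2$ according to whether $r\in\Q(c_\ell)$.

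Finally, for (5) I would prove $0$-dimensionality by confining trace coordinates. Any $\rho\colon T_{n_1,n_2,n_3}\to\PSLTC$ sends $x,y,xy$ to elements of order dividing $n_1,n_2,n_3$, and a finite-order element of $\PSLTC$ has a lift with root-of-unity eigenvalues, so $\text{tr}^2$ of each of $\rho(x),\rho(y),\rho(xy)$ lies in an explicit finite set. As the character of a two-generator representation is determined by these trace data, only finitely many irreducible characters can occur; and because the abelianization of $T_{n_1,n_2,n_3}$ is finite (its relation matrix has rank $2$), only finitely many reducible characters occur as well, so $X(T_{n_1,n_2,n_3})$ is a finite set of points. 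I expect this $0$-dimensionality and the geometric argument of the second paragraph to be the only steps needing genuine care, with the remaining items amounting to trace bookkeeping and standard cyclotomic field theory.
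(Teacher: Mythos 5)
Your proposal follows the paper's proof essentially step for step: reality of $r$ via nonnegativity of the discriminant of its defining quadratic, the order computations via traces/eigenvalues with the quadratic forcing $\mathrm{tr}\,\varrho(xy)=\pm 2c_{2n_3}$, discreteness/faithfulness/rigidity from the hyperbolic triangle (the paper exhibits a fundamental domain and cites Maskit's treatment of triangle groups where you invoke Poincar\'e's polygon theorem --- the same geometric content), integrality from monicity of the quadratic for $r$ over $\Q(c_\ell)$, the field of definition read off from the matrix entries, the trace field generated by $\mathrm{tr}\,\varrho(x),\mathrm{tr}\,\varrho(y),\mathrm{tr}\,\varrho(xy)$, and finiteness of the character variety from the finitely many admissible values of $\chi_\rho(x),\chi_\rho(y),\chi_\rho(xy)$.

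One inference in your write-up does not follow as stated. You deduce $\Q(c_{2n_1},c_{2n_2},c_{2n_3})=\Q(c_\ell)$ from the fact that the compositum of the fields $\Q(\zeta_{2n_k})$ is $\Q(\zeta_\ell)$. But the compositum of the maximal real subfields need not equal the maximal real subfield of the compositum: for instance the real subfields of $\Q(\zeta_4)$ and $\Q(\zeta_3)$ are both $\Q$, while the real subfield of $\Q(\zeta_{12})$ is $\Q(\sqrt3)$. Concretely, for $(n_1,n_2,n_3)=(2,3,7)$ one has $c_4=0$ and $c_6=1/2$, so $\Q(c_4,c_6,c_{14})=\Q(\cos(\pi/7))$ has degree $3$ over $\Q$, whereas $\phi(84)/2=12$. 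The paper asserts the identity $\Q(c_{2n_1},c_{2n_2},c_{2n_3})=\Q(c_\ell)$ directly rather than via your compositum argument, so your conclusion matches the statement being proved; but your justification is a non sequitur, and this equality is the one place in the argument that requires genuine verification (or qualification) rather than bookkeeping. Everything else in your proposal is consistent with the paper's proof.
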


\begin{proof} Let $T=T_{n_1,n_2,n_3}$.
We begin by showing that $r$ as defined  is real. This occurs when the discriminant of the defining quadratic, as a function of $r$, is not negative.  Therefore,  it is enough to verify that
\[
4(c_{2n_1}-c_{2n_2})^2 - 4 (2-4c_{2n_1}c_{2n_2}- 2 c_{2n_3}) \geq 0.
\]
This is equivalent to showing that
\[
 (c_{2n_1}+c_{2n_2})^2+2c_{2n_3} \geq 2 \tag{$*$}.
\]
 Recall that  $n_1\leq n_2\leq n_3$ and $\tfrac{1}{n_1}+\tfrac{1}{n_2}+\tfrac{1}{n_3}<1$.   When $n_1\geq 3$ all cosines are at least $1/2$ and the left hand side of $(*)$ is at least 2 and so the expression is satisfied. Therefore it suffices to consider the case when $n_1=2$, so that $n_2 \geq 3$ by the hyperbolic condition.  If $n_2=3$ then $n_3\geq 7$ and $(*)$ holds as $\cos (2\pi / 2n_3)\geq 7/8$. Similarly, if $n_2=4$ then $n_3\geq 5$ and $(*)$ is satisfied as $\cos(2\pi/2n_3)\geq 3/4$.  Finally, if $n_2\geq 5$ then $n_3 \geq 5$,  so that $\cos (2\pi/10)>0.8$ and its square is at least $0.65$ therefore, $(c_{2n_1}+c_{2n_2})^2+2c_{2n_3} \geq 2.25$ as needed.  Therefore $r$ is real.

Next, we verify that the orders of $\varrho(x)$, $\varrho(y)$, and $\varrho(xy)$ are $n_1$, $n_2$, and $n_3$, respectively.  We use the fact that in $\PSLTC$, for an integer $n>0$  elements with  the following traces have order $n$:  $\pm 2 \cos(2\pi l /2n)$ where $\gcd(l,2n)=1$ and additionally for odd $n$,  $\pm 2 \cos(2\pi l/n)$  where $\gcd(l,n)=1$.   The fact that the order of $\varrho(x)$ is $n_1$ and the order of $\varrho(y)$ is $n_2$ follows from this.
The  trace of $\varrho(xy)$ is 
\[
\pm \big( -2+2r(c_{2n_2}-c_{2n_1})+4c_{2n_1}c_{2n_2}-r^2 \Big)
\]
and  $\varrho(xy)$ has order $n_3$ when this trace is $\pm 2 c_{2n_3}$.  Therefore, for $r$ as indicated the order of $\varrho(xy)$ is $n_3$.

The fundamental domain for $\rho(T)$ as pictured in Figure \ref{fig_triangle_proof} is consistent with the description of triangle groups described in \cite[IX.C]{maskit1988kleinian}, which is discrete, rigid, and faithful.  

All entries of $\varrho(x)$ and $\varrho(y)$ are integral and the minimal polynomial for $r$ over $\Q(c_{2n_1},c_{2n_2},c_{2n_3})=\Q(c_{\ell})$ is monic, so all entries of $\varrho(T)$ are integral. This shows that $\varrho(T) \subset \mathrm{PSL}(2,\mathcal O_K)$, completing the proof of  (1). 

Next, we will prove (2). This follows from the fact that  for either solution $r$ of its defining quadratic, the matrices defining $\varrho$  confirm that the entries of $\varrho(x)$ and $\varrho(y)$ are contained in   $\Q(c_{2n_1},c_{2n_2},c_{2n_3}, r)$ which equals $\Q(c_{\ell},r)$. It suffices to see that $K$ is the smallest field containing the entries of $\varrho(T)$. The values $\pm 2c_{2n_1}$, $\pm 2c_{2n_2}$ and $\pm 2c_{2n_3}$ are traces of $\varrho(x)$, $\varrho(y)$ and $\varrho(xy)$.  Therefore these values are contained in the field of definition.  Moreover, since $\pm(r-2c_{2n_2})$ is the $(2,1)$ entry of $\varrho(xy)$ and $2c_{2n_2}$ is contained in the field of definition, so is $r$. It follows that $K$ is the field of definition. 

The trace field is generated by the traces of $\varrho(x)$, $\varrho(y)$ and $\varrho(xy)$ (see \cite[Eqn. (3.25)]{MR03}).  As such,  the trace field for $ \varrho(T)$ equals  $\Q(c_{2n_1},c_{2n_2},c_{2n_3})=\Q(c_\ell)$, proving (3).  

The field  $\Q(c_{\ell})$ is the totally real subfield of $\Q(\zeta_{\ell})$, and so $K=\Q(c_{\ell},r)$ is the totally real subfield of $\Q(\zeta_{\ell},r)$.   As such, since  $[\Q(\zeta_{\ell}):\Q]=\phi({\ell})$ the degree   $[\Q(\zeta_{\ell},r):\Q]= \phi(\ell)$ or $2\phi({\ell})$ depending on whether $r\in \Q(\zeta_{\ell})$ or not.  Therefore,   $[K:\Q]= \phi({\ell})/2$ or $\phi(\ell)$ depending on whether $r\in \Q(c_{\ell})$ or not. This proves (4).

It remains to show (5). 
Let $\rho$ be a representation of $T$ into $\PSLTC$ with associated character $\chi_{\rho}: T \rightarrow \CC$ defined by $\chi_{\rho}(\gamma) = \text{tr}(\rho(\gamma))^2$ for all $\gamma \in T$. Since $T$  is a two generator group, the character variety is isomorphic to the set of characters $\{ \chi_{\rho}(x), \chi_{\rho}(y), \chi_{\rho}(xy)\}$.  The group relations for $T$ specify that the orders of $x,y$ and $xy$ are $n_1$, $n_2$, and $n_3$, respectively so that the orders of $\rho(x)$, $\rho(y)$ and $\rho(xy)$ must be  divisors of these $n_k$.
 Translating this as a trace condition, the character variety is determined by solutions to 
\[
\chi_{\rho}(x) = 4 \cos(2 \pi l_1/ 2n_1)^2, \ \chi_{\rho}(y) =   4 \cos (2 \pi l_2/ 2n_2)^2,  \ \chi_{\rho}(xy) = 4  \cos (2 \pi l_3/ 2n_3)^2
\]
  where $l_k$ divides $n_k$,  (when $n_k$ is odd  we also consider solutions equalling $4(\cos(2\pi l_k/n_k))^2$).
As such, it consists of a finite number of points. 
\end{proof}

\begin{remark}
It seems likely that $[\Q(c_\ell, r) : \Q(c_\ell)]=2$ for all choices of $n_1,n_2,n_3$. 
To show this index equals 2 it is sufficient to find a non-real embedding of $\Q(c_\ell, r)$.  Such an embedding exists if there is an integer $l$ with $\gcd(l, \ell)=1$ and 
\[
\big(\cos (2 \pi l/2n_2) + \cos (2 \pi l/2n_3)\big)^2 + 2 \cos (2 \pi l/2n_3) <2.
\]
A SageMath \cite{sagemath} computation (available as an ancillary file) confirms the fields are different for all of the 170 cases where $\tfrac1{n_1}+\tfrac1{n_2}+\tfrac1{n_3} <1$ and $n_i \leq 19$. In fact,  Waterman and Maclachlan point out in \cite[Theorem 3]{MR766224} that for very similar representations only at most finitely many triples can have the fields being equal.

\end{remark}

\begin{lemma}\label{lemma:cosinenorm}
Assume $n> 2$. 
\begin{enumerate}
\item $|N_{\Q(\zeta_{2n})/\Q}(2c_{2n})|$ equals $p^2$ if $n$ is twice a power of the prime $p$,  and 1 otherwise.
\item $|N_{\Q(\zeta_{2n})/\Q}(2c_{2n}-2)|$ equals $4$ if $n$ is a power of 2, and 1 otherwise. 
\end{enumerate}
\end{lemma}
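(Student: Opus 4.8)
The plan is to exploit the factorization $2c_{2n}=\zeta_{2n}+\zeta_{2n}^{-1}$ and reduce both norm computations to evaluations of cyclotomic polynomials at $\pm 1$, whose values are classical. Write $\zeta=\zeta_{2n}$, so that the two identities driving the argument are
\[
2c_{2n}=\zeta^{-1}(\zeta^2+1)=\zeta^{-1}(1+\zeta_n),\qquad 2c_{2n}-2=\zeta^{-1}(\zeta-1)^2,
\]
where I use that $\zeta^2=\zeta_{2n}^2$ is a primitive $n$-th root of unity. Since the norm is multiplicative and $\zeta_{2n}$ is an algebraic unit all of whose conjugates are roots of unity, $|N_{\Q(\zeta_{2n})/\Q}(\zeta_{2n})|=1$. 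Thus the leading factor $\zeta^{-1}$ contributes nothing to the absolute value, and the problem reduces to computing $|N_{\Q(\zeta_{2n})/\Q}(1+\zeta_n)|$ for part~(1) and $|N_{\Q(\zeta_{2n})/\Q}(\zeta_{2n}-1)|^2$ for part~(2).

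I would do part~(2) first, as it is the cleaner half. Here $\zeta_{2n}-1$ already generates $\Q(\zeta_{2n})$, so its norm over $\Q$ is $\prod_{\gcd(j,2n)=1}(\zeta_{2n}^{\,j}-1)=\pm\,\Phi_{2n}(1)$. Invoking the standard evaluation $\Phi_m(1)=p$ when $m$ is a power of a prime $p$ and $\Phi_m(1)=1$ otherwise (for $m>1$), I observe that the even number $2n$ is a prime power exactly when $2n=2^j$, i.e.\ exactly when $n$ is a power of $2$. Squaring gives $|N_{\Q(\zeta_{2n})/\Q}(2c_{2n}-2)|=\Phi_{2n}(1)^2$, which is $4$ if $n$ is a power of $2$ and $1$ otherwise, as claimed.

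For part~(1), the new wrinkle is that $1+\zeta_n$ lies in the subfield $\Q(\zeta_n)$, so I first apply transitivity of the norm: $N_{\Q(\zeta_{2n})/\Q}(1+\zeta_n)=N_{\Q(\zeta_n)/\Q}(1+\zeta_n)^{\,e}$ with $e=[\Q(\zeta_{2n}):\Q(\zeta_n)]$, which equals $1$ for odd $n$ and $2$ for even $n$. The inner norm is $\prod_{\gcd(j,n)=1}(1+\zeta_n^{\,j})=\pm\,\Phi_n(-1)$, so everything comes down to evaluating $\Phi_n(-1)$. I would compute this with the classical recursions $\Phi_{2m}(x)=\Phi_m(-x)$ for odd $m>1$ and $\Phi_{2m}(x)=\Phi_m(x^2)$ for even $m$; both collapse to $\Phi_n(-1)=\Phi_{n/2}(1)$ when $n$ is even, while $\Phi_n(-1)=1$ for odd $n>1$. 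Feeding this back: for odd $n$ the norm is $1$ (consistent, since an odd $n$ is never twice a prime power), and for even $n$ the absolute value is $\Phi_{n/2}(1)^2$, which is $p^2$ precisely when $n/2=p^k$ is a prime power — that is, $n=2p^k$ for odd $p$, or $n=2^{k+1}$ in the case $p=2$ — and $1$ otherwise. This is exactly the dichotomy ``$n$ is twice a power of the prime $p$.''

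The only genuine content beyond routine bookkeeping is the evaluation of $\Phi_n(-1)$: the parities of $n$ and of $n/2$ must be tracked carefully through the cyclotomic recursions, and this is where the two flavors of part~(1), the odd-prime case $n=2p^k$ and the case $n=2^{k+1}$, must be reconciled into a single uniform statement. I expect this parity bookkeeping to be the main obstacle; everything else rests only on multiplicativity of the norm together with the two boundary evaluations $\Phi_m(1)$ and $\Phi_m(-1)$, both of which are standard.
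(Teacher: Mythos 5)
Your proof is correct and follows essentially the same route as the paper: both factor out the unit $\zeta_{2n}$, reduce part (2) to $\Phi_{2n}(1)^2$ and part (1) to $\Phi_n(-1)$ raised to the power $1$ or $2$ according to the parity of $n$, and then invoke the classical evaluations of cyclotomic polynomials at $\pm 1$. The only cosmetic differences are that you obtain the exponent via transitivity of the norm through $\Q(\zeta_n)$ where the paper counts the conjugates modulo $n$ directly, and that you derive $\Phi_n(-1)=\Phi_{n/2}(1)$ from the standard recursions where the paper cites the value of $\Phi_k(-1)$ from the literature.
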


\begin{proof}
 For an integer $k\geq 1$, let $\Phi_k$ denote the $k$th cyclotomic polynomial, and 
 define
\[
X_k=\prod (e^{ 2\pi i l /k}+1),   \quad  Y_k=\prod (e^{ 2\pi i l /k}-1) 
\]
 where the products are over all  integers $l$ between 1 and $k-1$ with $\gcd(l,k)=1$.  Therefore,  $X_k=\Phi_k(-1)$ and $Y_k = \pm \Phi_k(1)$.   By \cite{MR3842902}, $\Phi_k(-1)$ is $-2$ when $k=1$, $0$ when $k=2$, $p$ when $k=2p^e$ where $p$ is a prime, and 1 otherwise. Similarly,  $\Phi_k(1)$ equals  $1$ if $k$ is not a prime power and $p$ if $k$ is a power of the prime $p$  (see \cite{MR0282947} page 73).  Let $N$ denote the field norm $N_{\Q(\zeta_{2n})/\Q}$. The statement will follow upon showing that  $N(2c_{2n}-2) = Y_{2n}^2$, and $N(2c_{2n})= X_n^2$ if $n$ is even,  and equals $X_n$ if $n$ is odd.  Let $\zeta=\zeta_{2n}=e^{2\pi i/2n}$ and recall that   $2c_{2n} = \zeta+\zeta^{-1}$.

First we prove $(1)$. Since $\zeta$ is a unit of norm 1, $N (2c_{2n}) = N (\zeta^2+1)$ and 
 \[ N (\zeta^2+1) = \prod (e^{ 2(2\pi i l /2n)}+1)= \prod (e^{ 2\pi i l /n}+1) \]
 where the $l$ values are those integers   between 1 and $2n$ with $\gcd(l,2n)=1$. First, assume that $n$ is even so that $\gcd(l,2n)=1$ occurs exactly when $\gcd(l,n)=1$.  The $l$ values above are all integers $l'$ between 1 and $n$ with $l'$ relatively prime to $n$, and additionally all $l'+n$ values for these $l'$.  The terms for a $l'$ value and a $l'+n$ value are the same, so this norm equals  $X_n^2$.  When $n$ is odd, $\gcd(l,2n)=1$ occurs exactly when $\gcd(l,n)=1$ and $l$ is odd. A complete set of such $l$ modulo $2n$ reduces to all $l$ relatively prime to $n$ modulo $n$. Therefore, the norm is $X_n$ in this case.

Now we prove $(2)$. Consider $2c_{2n}-2 =  \zeta+\zeta^{-1}-2 = \zeta^{-1}(\zeta-1)^2$.  Since $\zeta$ is a unit of norm 1, $N (2c_{2n}-2) = N (\zeta-1)^2$. We now compute 
\[
N (\zeta-1) = \prod (e^{2\pi i l/2n} -1) 
\]
where the product is over all integers $l$ between 1 and $2n$ with $\gcd(l,2n)=1$. This is $Y_{2n}$ as needed.
\end{proof}

The following proposition uses the notation established in Definition~\ref{defn:varrho} and Proposition~\ref{prop:trianglePSL}.

\begin{proposition}\label{prop:onlycyclicabelians} Let $\pi$ be a prime ideal in  $\mathcal O_K$ and let $\bar{\varrho}$ denote the image of $\varrho$ under the reduction modulo $\pi$ map.  If $\gcd(n_1,n_2,n_3)=1$, $\tfrac1{n_1}+\tfrac1{n_2}+\tfrac1{n_3}<1$,  and $\pi$  lies over a rational prime $p \equiv 1 \pmod \ell$  then the image $\bar{\varrho}(T_{n_1,n_2,n_3})$ is non-abelian.
 \end{proposition}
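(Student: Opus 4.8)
The plan is to prove the contrapositive in spirit: I will show that $\bar\varrho(x)$ and $\bar\varrho(y)$ cannot commute in $\PSLTF$ by controlling the reduction of the commutator trace. First I would record the Fricke identity for the lifts given by the explicit matrices of Definition~\ref{defn:varrho},
\[
\mathrm{tr}[\varrho(x),\varrho(y)] - 2 \;=\; \delta \;:=\; a^2 + b^2 + c^2 - abc - 4,
\]
where $a = \mathrm{tr}\,\varrho(x) = 2c_{2n_1}$, $b = \mathrm{tr}\,\varrho(y) = 2c_{2n_2}$, and $c = \mathrm{tr}\,\varrho(xy)$ is the signed trace of the product matrix, which by Proposition~\ref{prop:trianglePSL} equals $\pm 2c_{2n_3}$. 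Since $\varrho$ is discrete and faithful into $\PSLTR$, and hence irreducible (non-elementary), we have $\delta \neq 0$ in the trace field $\Q(c_\ell)$.

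Next I would reduce ``abelian'' to ``$\delta \equiv 0 \pmod\pi$''. Suppose $\bar\varrho(T_{n_1,n_2,n_3})$ is abelian. Because $p \equiv 1 \pmod\ell$ forces $p \nmid \ell$, the eigenvalues of $\bar\varrho(x),\bar\varrho(y),\bar\varrho(xy)$ are $\ell$-th roots of unity that retain their exact orders, so these elements are semisimple and nontrivial. Lifting the commuting pair $\bar\varrho(x),\bar\varrho(y)$ to $\mathrm{SL}(2,\mathbb F)$, they either commute or anticommute; anticommuting would force both to have trace $0$, hence $n_1 = n_2 = 2$, already contradicting $\tfrac1{n_1}+\tfrac1{n_2}+\tfrac1{n_3}<1$. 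So the lifts commute, are simultaneously diagonalizable, the image is reducible, and $\mathrm{tr}[\bar\varrho(x),\bar\varrho(y)] = 2$, i.e. $\delta \equiv 0 \pmod\pi$. It therefore suffices to prove $\delta \not\equiv 0 \pmod\pi$.

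The heart of the argument is to factor $\delta$ into cyclotomic pieces and control its norm. Viewing $\delta$ as a quadratic in $c$ and using $a^2-4 = (u-u^{-1})^2$ and $b^2-4 = (v-v^{-1})^2$ with $u=\zeta_{2n_1}$, $v=\zeta_{2n_2}$, the discriminant is $(a^2-4)(b^2-4)$ and one obtains
\[
\delta \;=\; \bigl(c - uv - (uv)^{-1}\bigr)\bigl(c - uv^{-1} - u^{-1}v\bigr).
\]
Writing $c = w + w^{-1}$ for the appropriate root of unity $w$ (of order dividing $\ell$, the sign of $c$ merely toggling $w \mapsto -w$), and using $(w+w^{-1})-(s+s^{-1}) = (1-ws^{-1})(1-ws)/w$, each factor splits into a unit of $\mathcal O_{\Q(\zeta_\ell)}$ times terms $1-\zeta^{(i)}$, where each $\zeta^{(i)}$ is a root of unity of order dividing $\ell$ (and none is $1$, since $\delta\neq 0$). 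Hence $N_{\Q(\zeta_\ell)/\Q}(\delta)$ is, up to sign, a product of values $\Phi_{m_i}(1)$ with $m_i \mid \ell$; by the cyclotomic evaluations recorded in the proof of Lemma~\ref{lemma:cosinenorm} ($\Phi_m(1)=1$ unless $m$ is a prime power $q^e$, in which case it is $q$), every rational prime dividing this norm divides $\ell$. Since $p \equiv 1 \pmod\ell$ gives $p\nmid\ell$, we get $p \nmid N(\delta)$, so $\pi \nmid \delta$ and thus $\delta \not\equiv 0 \pmod\pi$, completing the proof.

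I expect the main obstacle to be the bookkeeping in the factorization step: confirming that the product matrix carries the sign of $c$ that keeps $\delta$ nonzero (a genuine subtlety --- for $(3,4,12)$ the naive choice $c=+2c_{2n_3}$ gives $\delta=0$, whereas the matrices force $c=-2c_{2n_3}$ and $\delta\neq 0$), and checking that each resulting factor is a root of unity of order dividing $\ell$ so that the norm is genuinely $\ell$-supported. The hypothesis $\gcd(n_1,n_2,n_3)=1$ delimits the regime where this representation-theoretic certificate is actually needed: when $d=\gcd(n_1,n_2,n_3)>1$, Lemma~\ref{lemma:Tsurjects} already supplies a non-cyclic abelian quotient $\Z/d\Z \times \Z/d\Z$ detectable by homology alone.
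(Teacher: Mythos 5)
Your proof is correct, but it takes a genuinely different route from the paper's. The paper argues directly on matrix entries: it writes out $\bar{\varrho}(xy)$ and $\bar{\varrho}(yx)$, assumes they agree for one of the two lifts ($+$ or $-$) to $\mathrm{SL}(2,\mathbb F)$, and in each resulting case ($n_1=2$ versus all $n_k>2$, times the two sign choices) extracts a congruence such as $2c_{2n_k}\equiv 0$ or $2c_{2n_3}-2\equiv 0$ that is then contradicted by the norm computations of Lemma~\ref{lemma:cosinenorm}; the hypothesis $\gcd(n_1,n_2,n_3)=1$ is invoked along the way to kill a residual cyclic-image case. You instead package the whole question into the single quantity $\delta=a^2+b^2+c^2-abc-4$ via the Fricke commutator identity, reduce ``abelian mod $\pi$'' to ``$\delta\equiv 0 \bmod \pi$'' (your commute-or-anticommute dichotomy for lifts is handled correctly, with anticommutation excluded by exactly the same norm facts the paper uses to exclude $2c_{2n_k}\equiv 0$), and then show $p\nmid N_{\Q(\zeta_{\ell})/\Q}(\delta)$ via the factorization $\delta=(c-uv-(uv)^{-1})(c-uv^{-1}-u^{-1}v)$ into unit multiples of terms $1-\zeta$ with $\zeta$ of order dividing $\ell$, so that $N(\delta)$ is a product of values $\Phi_m(1)$ supported on primes dividing $\ell$; I checked the quadratic-in-$c$ factorization and the order bookkeeping (including that $-\zeta_{2n_3}$ has order dividing $2n_3$), and they hold. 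Your approach buys a uniform, case-free argument that never uses $\gcd(n_1,n_2,n_3)=1$ --- so it actually proves the stronger statement that $\bar\varrho$ is non-abelian for every hyperbolic triple --- and it isolates exactly which primes could fail; you were also right to flag and resolve the sign subtlety, since the defining quadratic for $r$ forces $\mathrm{tr}\,\varrho(xy)=-2c_{2n_3}$ on the nose, which (consistently with irreducibility of the discrete faithful $\varrho$) is what keeps $\delta\neq 0$. What the paper's entry-by-entry computation buys in exchange is that it stays entirely inside the explicit integral matrices and directly exhibits $\bar\varrho(xy)\neq\bar\varrho(yx)$, which is literally the inequality the polynomial-time certificate of Proposition~\ref{prop:cert_non_abelian} verifies.
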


\begin{proof}
Use $\equiv$ to denote reduction modulo $\pi$.  By the definition of $\ell$, it follows that for $k=1,2,3$, the integers $2$ and  $n_k$ are coprime to $p$ and therefore are units modulo $\pi$.   By Lemma~\ref{lemma:cosinenorm}, for $n>2$ the norm of $2c_{2n}$ is either $\pm 1$ or $\pm q^2$ where $\pm q^2$ occurs only when $2n$ is twice a power of the prime $q$. Therefore  any such $q$ is coprime to $p$ as well, and  $2c_{2n_k}$ is a unit in the quotient for $k=1,2,3$.   We will show that $\bar{\varrho}$ is non-abelian by showing that  $\bar{\varrho}(xy) \neq  \bar{\varrho}(yx)$.  As we are working in $ \PSLTC$ we will consider the $+$ and $-$ solution in terms of the (reduction of the) matrices given above.

We begin by considering the case when $n_1=2$ and assume that  $\bar{\varrho}(xy)\equiv \bar{\varrho}(yx)$.  First, consider the $-$ solution. From the $(2,1)$ entries of $\bar{\varrho}(xy)$ and $\bar{\varrho}(yx)$,  $2c_{2n_2} \equiv 0$, so that $r^2\equiv -2$ from the $(1,1)$ entries. The defining equation for $r$ reduces to  $r^2+2(1-c_{2n_3})\equiv 0$ and we conclude that $2c_{2n_3}\equiv 0$ as well.   As such, $p$ divides $N_{\Q(\zeta_{2n_2})/\Q}(2c_{n_2})$ and $N_{\Q(\zeta_{2n_3})/\Q}(2c_{n_3})$.  By Lemma~\ref{lemma:cosinenorm}, since $\pi$ and $n_2n_3$ are relatively prime, we conclude that $n_2=n_3=2$ so that $n_k=2$ for $k=1,2,3$ contradicting the assumption that $\gcd(n_1,n_2,n_3)=1$.  Next, consider the $+$ solution. The $(1,1)$ and $(1,2)$ entries of $\bar{\varrho}(xy)\equiv \bar{\varrho}(yx)$ imply that $2r\equiv 2c_{2n_2}$ and $r(r-2c_{2n_2})\equiv 0$. Since the quotient is a field  either $r\equiv 0$ or $r\equiv 2c_{2n_2}$.   In the second case, since $2r\equiv 2c_{2n_2}$ we conclude that $2c_{2n_2} \equiv r\equiv 0$.  In either case $r\equiv 0$ and therefore $2c_{2n_2} \equiv 0$ as well.   The defining equation for $r$ implies that $2-2c_{2n_3}\equiv 0$. By Lemma~\ref{lemma:cosinenorm} we conclude that $p=2$ which cannot occur.

Now we assume that $n_k>2$ for $k=1,2,3$. The image is non-trivial since $\bar{\varrho}(x)$ has off-diagonal entries equal to $\pm 1$.  Consider the $-$ solution.   By equating the $(2,1)$ entries, we have that $2c_{2n_2}\equiv -2c_{2n_1}$.  Using this to substitute for $2c_{2n_2}$ the $(1,2)$  entries imply that  $2 c_{2n_1} r( r+2c_{2n_1})\equiv 0$.  The quotient is a field.   It follows that  either $r\equiv 0$ or $r\equiv -2c_{2n_1}$.  If $r\equiv 0$ then the $(2,2)$ entries imply that $2\equiv 0$ and so $p=2$, which cannot occur.  If $r\equiv -2c_{2n_1}$ then the $(2,2)$ entries imply that $4c_{2n_1}^2- 1\equiv 0$. This factors as $(2c_{2n_1}+1)(2c_{2n_1}-1) \equiv 0$ so that $2c_{2n_1} \equiv \pm 1$ and $r\equiv \mp 1$. With this, the  $(2,2)$ entries imply that $1\equiv 0$ which cannot happen.

Consider the $+$ solution.  The $(2,2)$ entries imply that $ r( r-2c_{2n_2} ) \equiv 0$ and so $r\equiv 0$ or $r\equiv 2c_{2n_2}$.   First consider the case when $r\equiv 0$.  Then the $(2,1)$ entries imply that $2c_{2n_2} \equiv 2c_{2n_1}$.   Since $r\equiv 0$, the $T_r$ matrix is trivial and  $\bar{\varrho}(x)=\bar{\varrho}(y)$ and the image is cyclic.   Therefore, $\varrho(x)^{n_1}=1$, $\varrho(y)^{n_2}=\varrho(x)^{n_2}=1$ and $\varrho(xy)^{n_3}=\varrho(x)^{2n_3}=1$, and the order of the cyclic quotient divides $\gcd(n_1,n_2,2n_3)$.  By assumption $\gcd(n_1,n_2,n_3)=1$, so we need only consider when $2=\gcd(n_1,n_2,2n_3)$. Modulo $\pi$, this  cyclic group is generated by 
\[ \bar{\varrho}(x) \equiv \mat{2c_{2n_1}}{1}{-1}{0}.\] 
For this to have order 2, $2c_{2n_1}\equiv 0$ and so $p\mid N_{\Q(\zeta_{\ell})/\Q}(2c_{2n_1})$.  By Lemma~\ref{lemma:cosinenorm}, this norm is $\pm 1$ unless $n_1$ is prime, whence the norm is $\pm n_1^2$. By construction, $p$ is coprime to $2n_1$  giving a contradiction.   It remains to assume that  $r\equiv 2c_{2n_2}$.  The $(2,1)$ entries imply that $2c_{2n_2}  \equiv - 2c_{2n_1}$.   By substituting the $r$ and $2c_{2n_2}$ values in the minimal polynomial for $r$ with $-2c_{2n_1}$,  we see that $2c_{2n_3}-2\equiv 0$. Therefore $p$ divides $N_{\Q(\zeta_{2n_3})/\Q}(2c_{2n_3}-2)$. By Lemma~\ref{lemma:cosinenorm}, the  norm is $\pm 1$ if $n_3$ is not a power of 2 and is $\pm 4$ otherwise.  This implies that $p=2$ which is a contradiction.
\end{proof}

We now prove  that   there is a non-trivial representation of any triangle group $T_{n_1,n_2,n_3}$ into $\PSLTF$ for a small order finite field $\mathbb F$.  Here, we assume that $\gcd(n_1,n_2,n_3)=1$; by Lemma~\ref{lemma:Tsurjects} if $\gcd(n_1,n_2,n_3)=d$ then $T_{n_1,n_2,n_3}$ surjects the non-cyclic group $\Z/d\Z \times \Z/ d\Z$.

\begin{theorem}\label{thm:using_Linnik}
Let $n_k\geq 2$ be integers for $k=1,2,3$,  $\ell=2\mathrm{lcm}(n_1,n_2,n_3)$, and $\frac{1}{n_1}+\frac{1}{n_2}+\frac{1}{n_3} < 1$. There is an effectively computable $c>0$ and a field $\mathbb{F}$ with $| \mathbb F|  \leq c \ell^{10}$ such that  $T_{n_1,n_2,n_3}$ has a  non-trivial representation into $\PSLTF$. If $\gcd(n_1,n_2,n_3)=1$ then this representation is non-abelian. 
\end{theorem}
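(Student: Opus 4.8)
The plan is to reduce the explicit integral representation $\varrho$ of Definition~\ref{defn:varrho} modulo a carefully chosen prime ideal of $\mathcal{O}_K$, where $K=\Q(c_\ell,r)$ as in Proposition~\ref{prop:trianglePSL}. By Proposition~\ref{prop:trianglePSL}(1) we have $\varrho(T_{n_1,n_2,n_3})\subset \PSL(2,\mathcal{O}_K)$, so every matrix entry is an algebraic integer and reduction modulo any prime ideal $\pi$ is well-defined. Since $M_{n_1}$ and $T_r$ have determinant $1$, the representation in fact lands in $\SLTC$, and its reduction $\bar\varrho$ carries $T_{n_1,n_2,n_3}$ into $\PSL(2,\mathbb{F})$ with $\mathbb{F}=\mathcal{O}_K/\pi$. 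Because reduction modulo $\pi$ is a ring homomorphism, the defining relations of the triangle group are automatically preserved, so $\bar\varrho$ is a genuine representation.

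The heart of the matter is choosing $\pi$ so that $\mathbb{F}$ is small. First I would apply Linnik's theorem to the (trivially coprime) residue class $1 \bmod \ell$ to produce a rational prime $p\equiv 1 \pmod \ell$ with $p\le c_0\,\ell^{L}$, where $L$ is Linnik's constant and $c_0>0$ is effective. The congruence $p\equiv 1\pmod\ell$ forces $p$ to split completely in $\Q(\zeta_\ell)$, hence in its totally real subfield $\Q(c_\ell)$, so every prime of $\Q(c_\ell)$ above $p$ has residue degree $1$. Since $[K:\Q(c_\ell)]\le 2$ by Proposition~\ref{prop:trianglePSL}(4), any prime $\pi$ of $\mathcal{O}_K$ over $p$ has residue degree $f\le 2$, whence $|\mathbb{F}|=p^{f}\le p^{2}\le c_0^{2}\,\ell^{2L}$. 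With the best unconditional value $L\le 5$ (Xylouris), this gives $|\mathbb{F}|\le c\,\ell^{10}$; in the frequent case $r\in\Q(c_\ell)$ one even has $f=1$ and $\mathbb{F}=\Fp$.

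It remains to verify non-degeneracy. The reduction $\bar\varrho$ is non-trivial for every hyperbolic triple, since the off-diagonal entries of $\bar\varrho(x)$ equal $\pm 1\ne 0$, so that $\bar\varrho(x)\ne \pm I$ in $\PSL(2,\mathbb{F})$; this needs no hypothesis on $\gcd(n_1,n_2,n_3)$. When $\gcd(n_1,n_2,n_3)=1$, the hypotheses of Proposition~\ref{prop:onlycyclicabelians} hold, because $\pi$ lies over a rational prime $p\equiv 1\pmod\ell$, so $\bar\varrho(T_{n_1,n_2,n_3})$ is non-abelian, completing the argument. I expect the main obstacle to be purely number-theoretic: producing a prime that is small \emph{and} compatible with the arithmetic of $K$ without invoking GRH. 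The device that keeps this unconditional is that we never require $p$ to split completely in the possibly non-cyclotomic field $K$ (which would demand an effective Chebotarev estimate); we impose only the congruence $p\equiv 1\pmod\ell$, exactly the setting of Linnik's theorem, and absorb the at most quadratic residue-degree jump from $\Q(c_\ell)$ to $K$ into the exponent, turning Linnik's $\ell^{5}$ into the stated $\ell^{10}$.
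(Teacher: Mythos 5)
Your proposal is correct and follows the paper's own argument essentially step for step: Linnik/Xylouris gives a prime $p\equiv 1\pmod{\ell}$ with $p\le c\ell^{5}$, complete splitting in $\Q(\zeta_\ell)$ descends to $\Q(c_\ell)$, the at-most-quadratic extension to $K=\Q(c_\ell,r)$ bounds the residue field by $p^{2}\le c^{2}\ell^{10}$, non-triviality comes from the unit off-diagonal entries of $\bar\varrho(x)$, and non-abelianness from Proposition~\ref{prop:onlycyclicabelians}. No substantive differences to report.
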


\begin{proof} Let $T=T_{n_1,n_2,n_3}$. 
By Xylouris's improvement upon Linnik's theorem, there is an effectively computable constant $c$ as above and prime $p\equiv 1 \pmod \ell$ with $p\leq c\ell^{5}$.  (This is proven in Theorem 2.1 of his doctoral thesis  \cite{MR3086819}.  In \cite{MR2825574} he demonstrates an exponent of 5.18.) The congruence condition on $p$ ensures that $p$ splits completely in $\Q(\zeta_{\ell}),$  and therefore $p$ splits completely in $\Q(c_{\ell})\subset \Q(\zeta_{\ell})$. (See \cite{MR0457396} Chapter 3 Corollary of Theorem 26 on page 78 for details about splitting and congruences in cyclotomic extensions.)  Let $r$ be as defined in Proposition~\ref{prop:trianglePSL}.  Since $K=\Q(c_{\ell},r)$ equals $\Q(c_{\ell})$ or is a quadratic extension of $\Q(c_{\ell})$ then any prime ideal $\pi$ in $\mathcal O_K$ lying over $p$ has $N_{K/\Q}(\pi)=p$ or $p^2$.  Proposition~\ref{prop:trianglePSL}  shows that $\varrho$ is a representation of $T$ into $\text{PSL}(2,\mathcal O_{K})$. Upon  composing with the reduction modulo $\pi$ map we have a homomorphism $\bar{\varrho}$ of $T$ into $\PSLTF$  where $\mathbb F= \mathcal O_{K}/\pi$ has order $p$ or $p^2$.  Putting this together,  $\bar{\varrho}(T)< \PSLTF$  where $|\mathbb F|\leq (c\ell^{5})^2$.

This representation is non-trivial since the $(1,2)$ and $(2,1)$ entries of $M_n$ are the units $\pm 1$  which survive the reduction  modulo $\pi$ mapping.
By Proposition~\ref{prop:onlycyclicabelians} the representation is non-abelian if $\gcd(n_1,n_2,n_3)=1$.
\end{proof}

A relatively straight-forward analysis handles the cases $\tfrac1{n_1}+\tfrac1{n_2}+\tfrac1{n_3} \geq  1$. Just as above, the following argument also produces small quotients. In this case, we can be more specific about these quotients. Namely, they are either $\Z/2\Z \times \Z/2\Z$,$\Z/3\Z \times \Z/3\Z$, or $\PSLTF$ with $|\mathbb F|\leq n_3^2$. The group $\mathbb Z/p \mathbb Z \times \mathbb Z/p \mathbb Z$  embeds in $\text{PSL}(2,\mathbb F_{p^2})$, so all of these images are contained in $\text{PSL}(2,\mathbb F)$ for  $|\mathbb F|\leq c \ell^{10}$ as in Theorem~\ref{thm:using_Linnik}.
We have chosen to remark on the non-cyclic abelian quotients separately because homology can be quickly computed directly.

\begin{proposition}\label{prop:non_abelian}
Let   $n_k\geq 2$ be integers for $k=1,2,3$ and    assume that $\tfrac1{n_1}+\tfrac1{n_2}+\tfrac1{n_3} \geq  1$. 
One of the following  holds
\begin{enumerate}
\item  $T_{n_1,n_2,n_3}$  surjects a non-cyclic abelian group of order at most 9, or 
\item $T_{n_1,n_2,n_3}$ has a non-abelian representation into  $\PSLTF$ with   $|\mathbb F| \leq n_3^2$.
\end{enumerate}
\end{proposition}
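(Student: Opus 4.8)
The plan is to run through the complete list of triples $(n_1,n_2,n_3)$ with $n_1\le n_2\le n_3$ and $\tfrac1{n_1}+\tfrac1{n_2}+\tfrac1{n_3}\ge 1$. Up to the ordering convention these are the infinite dihedral family $(2,2,m)$ with $m\ge 2$, together with the six sporadic triples $(2,3,3)$, $(2,3,4)$, $(2,3,5)$, $(2,3,6)$, $(2,4,4)$, $(3,3,3)$. I would first dispose of every triple with $d:=\gcd(n_1,n_2,n_3)>1$ using Lemma~\ref{lemma:Tsurjects}: such a group surjects $\Z/d\Z\times\Z/d\Z$, a non-cyclic abelian group of order $d^2$. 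This places $(2,2,m)$ with $m$ even (including $(2,2,2)$), $(2,4,4)$, and $(3,3,3)$ into conclusion (1), with orders $4$, $4$, and $9$ respectively.

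For the sporadic triples with $\gcd=1$ I would exhibit an explicit non-abelian quotient that is a small $\PSL(2,\mathbb F)$. Here $T_{2,3,3}\cong A_4\cong\PSL(2,\mathbb F_3)$ and $T_{2,3,5}\cong A_5\cong\PSL(2,\mathbb F_5)$, while $T_{2,3,4}\cong S_4$ embeds in $\PSL(2,\mathbb F_7)$ (as $7\equiv-1\bmod 8$); for the Euclidean triple $(2,3,6)$, Lemma~\ref{lemma:Tsurjects} gives a surjection onto $T_{2,3,3}\cong\PSL(2,\mathbb F_3)$. In each case $|\mathbb F|\in\{3,5,7\}$ is comfortably below $n_3^2\in\{9,16,25,36\}$, so these land in conclusion (2).

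The only remaining, and genuinely delicate, case is the dihedral family $(2,2,m)$ with $m$ odd and $m\ge3$, where the abelianization of $T_{2,2,m}\cong D_m$ is the cyclic group $\Z/2\Z$, so conclusion (1) is unavailable and a non-abelian $\PSL(2,\mathbb F)$ quotient with $|\mathbb F|\le m^2$ must be produced. The naive route --- realizing an order-$m$ element inside a split or non-split torus of some $\PSL(2,\mathbb F_p)$ --- requires a rational prime $p\equiv\pm1\pmod m$, whose size is only controlled by Linnik's theorem ($p\lesssim m^5$) and hence badly violates the bound $|\mathbb F|\le n_3^2=m^2$. This is the main obstacle. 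The key idea that bypasses it is to realize the needed element of large order as a \emph{unipotent} in its own characteristic: let $\mathfrak p$ be the smallest prime divisor of $m$ (odd, $\ge3$), work in $\mathbb F=\mathbb F_{\mathfrak p^2}$, and set $U=\left(\begin{smallmatrix}1&1\\0&1\end{smallmatrix}\right)$, which has order $\mathfrak p$ in $\PSL(2,\mathbb F)$ since $\mathrm{char}\,\mathbb F=\mathfrak p$.

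To assemble the dihedral group I would choose a square root $i$ of $-1$ in $\mathbb F_{\mathfrak p^2}$ (which exists because $\mathbb F_{\mathfrak p^2}^\times$ is cyclic of order $\mathfrak p^2-1\equiv 0\pmod 4$) and set $X=\left(\begin{smallmatrix}i&0\\0&-i\end{smallmatrix}\right)$, an involution in $\PSL(2,\mathbb F)$ with $XUX^{-1}=U^{-1}$ by a one-line matrix computation. Defining $x\mapsto X$ and $y\mapsto XU$ (both involutions) gives $xy\mapsto U$, and since $\mathfrak p\mid m$ the relation $(xy)^m\mapsto U^m=I$ holds; thus this is a well-defined representation of $T_{2,2,m}$ whose image $\langle U,X\rangle\cong D_{\mathfrak p}$ is non-abelian because $\mathfrak p\ge3$. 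As $|\mathbb F|=\mathfrak p^2\le m^2=n_3^2$, this completes conclusion (2) for the dihedral family, and hence the proposition. (When $\mathfrak p\equiv1\bmod4$ one may even take $\mathbb F=\mathbb F_{\mathfrak p}$, but $\mathbb F_{\mathfrak p^2}$ works uniformly.)
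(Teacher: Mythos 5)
Your proposal is correct and follows essentially the same route as the paper: the same enumeration of non-hyperbolic triples, the same use of Lemma~\ref{lemma:Tsurjects} for the triples with $\gcd>1$, and for $T_{2,2,m}$ with $m$ odd the identical construction (your $X$ and $XU$ are exactly the paper's images of $x$ and $y$, realizing $xy$ as a unipotent in characteristic $\mathfrak p\mid m$). The only cosmetic differences are that the paper places $S_4$ inside $\PSL(2,\mathbb F_9)$ rather than $\PSL(2,\mathbb F_7)$ and writes the quotient field as $\Z[i]/\pi$ rather than $\mathbb F_{\mathfrak p^2}$; both satisfy the stated bounds.
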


\begin{proof}

The possible triples are $(2,3,6)$, $(2,4,4)$, $(3,3,3)$, $(2,3,3)$, $(2,3,4)$, $(2,3,5)$ or $(2,2,m)$ with $m\geq 3$.

In the spherical cases,
\[
T_{2,3,3} 	\cong \text{PSL}(2,\mathbb F_3), T_{2,3,4}<S_4  \cong \text{PSL}(2,\mathbb F_9), T_{2,3,5} \cong \text{PSL}(2,\mathbb F_5)
\]
all with non-abelian image.

For the Euclidean cases, $T_{2,3,6}$ surjects $T_{2,3,3}$,  $T_{2,4,4}$ surjects the non-cyclic group $\mathbb Z/2 \mathbb Z \times \mathbb Z/2 \mathbb Z$, and $T_{3,3,3}$ surjects the non-cyclic group $\mathbb Z/3 \mathbb Z \times \mathbb Z/3 \mathbb Z$.  It remains to consider the triangle group $T_{2,2,m}$ with $m\geq 3$ odd as otherwise $T_{2,2,m}$ surjects  $\mathbb Z/2 \mathbb Z \times \mathbb Z/2 \mathbb Z$.  The triangle group $T_{2,2,m}$ is isomorphic to the dihedral group 
\[ D_{2m}=\langle s_1,s_2 \mid s_1^2=s_2^{m}, s_1s_2=s_2^{-1}s_1\rangle\] 
by $x\mapsto s_1$, $xy \mapsto s_2$. Let $p$ be any prime divisor of $m$,  $\pi$  a prime in $\Z[i]$ lying over $p$, and  $\mathbb F =  \mathbb \Z[i]/\pi$. Define $\rho:T_{2,2,m} \rightarrow \PSLTF$ by 

\[ x\mapsto \pm \mat{i}{0}{0}{-i}, \  y \mapsto \pm \mat{i}{i}{0}{-i}, \  \text{ so that } \ xy \mapsto \pm \mat{1}{1}{0}{1} \ \text{ and } \ yx \mapsto  \pm \mat{1}{-1}{0}{1} \]

Then we have that $\rho(x)$ and  $\rho(y)$ are order 2 and $\rho(xy)$ is order $p$. 
We observe that  $\rho(yx)\neq \rho(xy)$ as otherwise $\pi \mid 2i$ and so $p=2$ divides $m$ which is not the case.  We conclude that $T_{2,2,m}$ is isomorphic to a non-abelian subgroup of $\PSLTF$ where $|\mathbb F| \leq m^2$.
\end{proof}

We point out that the characteristic of $|\mathbb F|$ in the above proof is at most $n_3$.

\section{Degree bounds and  triangulations of $3$--manifolds}\label{sect:degreebounds}

This section describes some basic properties of the representations of closed manifold groups.  First, note that given a triangulation of a closed $3$--manifold $M$ with $t$ tetrahedra and $v$ vertices, there is a 1-vertex triangulation of the same manifold with $t$ or fewer tetrahedra \cite[Theorems 5.5, 5.6, and 5.14]{jaco20030}. We point out that if $M\cong \Sth$, then there is 1-vertex triangulation of $\Sth$ with one tetrahedron.

The following proposition  combines the arguments worked out in \cite[Theorem 6.12 and Exercise 6.3]{armstrong2013basic} with the existence of a 1-vertex triangulation for $M$. 
 
\begin{proposition}\label{prop:triangulation_facts}
Let $M$  be a triangulated, closed, orientable $3$--manifold admitting a triangulation with $t$ tetrahedra. Then there exists a (not necessarily minimal) presentation of $\pi_1(M)$ with at most $t+1$ generators and at most $2t$ relations each of length at most $3$. 
\end{proposition}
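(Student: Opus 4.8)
The plan is to pass to a one-vertex triangulation and then read a presentation of $\pi_1$ directly off the cell structure of the $2$-skeleton, using the Euler characteristic to count cells.

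First I would invoke the one-vertex triangulation recorded just before the statement (via \cite[Theorems 5.5, 5.6, and 5.14]{jaco20030}): replacing the given triangulation by a one-vertex triangulation of the same manifold with $t'\leq t$ tetrahedra, it suffices to establish the bounds with $t$ replaced by $t'$, since $t'+1\leq t+1$ and $2t'\leq 2t$. So I may assume $M$ carries a triangulation with a single vertex, $v=1$. Next, since attaching cells of dimension at least $3$ does not change the fundamental group, $\pi_1(M)\cong\pi_1(M^{(2)})$, where $M^{(2)}$ is the $2$-skeleton; thus I only need to track the $0$-, $1$-, and $2$-cells.

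I would count these with the Euler characteristic. Each of the $t$ tetrahedra has four triangular faces, and the face pairings identify these in pairs, so there are $f=2t$ faces; there are $T=t$ tetrahedra and $v=1$ vertex. Since $M$ is a closed $3$-manifold, $\chi(M)=0$, so $v-e+f-T=0$ gives $1-e+2t-t=0$, i.e.\ $e=t+1$ edges. Now I would produce the presentation from the CW (equivalently $\Delta$-complex) structure, following the argument of \cite[Theorem 6.12 and Exercise 6.3]{armstrong2013basic}. With a single $0$-cell the $1$-skeleton is a wedge of $e=t+1$ circles, so $\pi_1(M^{(1)})$ is free on the $t+1$ edges; these are my generators. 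Each of the $2t$ triangular $2$-cells is attached along a loop tracing its three boundary edges, and by the standard description of $\pi_1$ of a $2$-complex the $2$-cells contribute exactly these attaching words as defining relators. Each such word is a product of three edge-generators (with orientations), hence a relation of length at most $3$. This yields $t+1$ generators and $2t$ relations of length at most $3$, as claimed.

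The main subtlety — and the step where I would be most careful — is that a one-vertex triangulation of a closed $3$-manifold is \emph{not} a genuine simplicial complex: distinct simplices may share all of their vertices, and faces or edges may be self-identified. Armstrong's edge-path theorem is stated for honest simplicial complexes, so I would not apply it verbatim but rather use its $\Delta$-complex/CW analogue, in which the generators are the $1$-cells and the relators are the boundary words of the $2$-cells. This is precisely where the single vertex pays off: there is no maximal spanning tree to collapse, so every edge becomes a generator and every triangle contributes a length-$3$ relator, producing the clean counts $t+1$ and $2t$ with no further bookkeeping. (Self-identifications could in principle make a relator have repeated or partially cancelling letters, but it is still a word of length at most $3$, which is all that the statement requires.)
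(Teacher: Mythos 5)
Your proposal is correct and follows essentially the same route the paper intends: it explicitly cites the reduction to a one-vertex triangulation via \cite[Theorems 5.5, 5.6, and 5.14]{jaco20030} together with the CW-complex presentation of $\pi_1$ from \cite[Theorem 6.12 and Exercise 6.3]{armstrong2013basic}, and your Euler characteristic count $e = 1 - 2t + t \cdot (-1)\cdot(-1) \Rightarrow e = t+1$ (from $v-e+f-T=0$ with $v=1$, $f=2t$, $T=t$) supplies exactly the edge count the paper leaves implicit. Your remark that the one-vertex triangulation must be treated as a $\Delta$-complex rather than an honest simplicial complex is a worthwhile clarification, but it does not change the argument.
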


The presentation obtained in this manner has the advantage that the relations are the same length. We will use $t$, the number of tetrahedra in a triangulation $T$ as our measure of the complexity of $M$.    We are interested in non-abelian representations.  As such, we are interested in non-elementary subgroups of $\PSLTC$.  We call a subgroup $G$ of $\PSLTC$ {\em elementary } if the action of $G$ on $\CC$ has at least one point with finite orbit. Otherwise, we say it is {\em non-elementary.} Non-elementary groups are non-abelian.

We now bound the degree of the trace field of certain $\rho(G)<\PSLTC$ by a function of $t$. We will then apply this bound to our triangle group representations $\varrho$ of Seifert fiber space groups from  Definition~\ref{defn:varrho} in Section~\ref{sect:triangleReps}.  As in the proof of Proposition~\ref{prop:trianglePSL}, our proof below makes use of the $\PSLTC$ character variety of the (finitely presented) group $G$, which we define as the set 
\[
\{ \chi_{\rho} \mid \rho:G\rightarrow \PSLTC\}
\]
where the character is the function  $\chi_{\rho}:G\rightarrow C$ defined by $\chi_{\rho}(g) = (\text{tr}(\rho(g)))^2$. This set is a complex affine algebraic set defined over $\mathbb Q$. 

\begin{lemma}\label{lem:trace_field_bounds}
Let $G$ be a group with at most $t+1$ generators and at most $2t$ relations each of length at most $3$.  Let $X$ be an irreducible component of the  $\PSLTC$ character variety of $G$ of dimension 0, such that the representations corresponding to $X$ are non-elementary. For any representation $\rho:G\rightarrow \PSLTC$ with $\chi_{\rho}\in X$, the degree of the trace field of $\rho(G)$ is bounded above by $2^{t-1}3^{6t}$. 

\end{lemma}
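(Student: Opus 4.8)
The plan is to realize any $\rho$ with $\chi_\rho \in X$ as an \emph{isolated} point of an explicit, $\Q$-rational, gauge-fixed representation scheme, to bound the number of such points by an iterated Bézout inequality, and to read the degree of the trace field off the residue field of that point. Using Proposition~\ref{prop:triangulation_facts} I would fix a presentation $G = \langle x_1, \ldots, x_g \mid w_1, \ldots, w_r \rangle$ with $g \le t+1$, $r \le 2t$ and each $|w_j| \le 3$, and record $\rho(x_i)$ by a lift $X_i = \mat{a_i}{b_i}{c_i}{d_i}$ to $\SLTC$. A representation is then a point of $\CC^{4g}$ cut out by the $g$ determinant equations $a_i d_i - b_i c_i = 1$ of degree $2$ together with relator equations. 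On the determinant-one locus $X_i^{-1} = \mat{d_i}{-b_i}{-c_i}{a_i}$, so each relator matrix $W_j = \rho(w_j)$ of length $\le 3$ has entries of degree at most $3$; in $\PSLTC$ the condition $\rho(w_j) = \mathrm{id}$ is exactly $W_j = \pm I$, encoded by the three equations $(W_j)_{12} = 0$, $(W_j)_{21} = 0$, $(W_j)_{11} - (W_j)_{22} = 0$ of degree at most $3$ (the sign of the lift only affects the common diagonal value, so no lift need be chosen). Thus the scheme is defined over $\Q$ by at most $g$ equations of degree $2$ and at most $3r$ equations of degree at most $3$.

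Next I would gauge-fix. Since the representations in $X$ are non-elementary, hence irreducible, each conjugation orbit is $3$-dimensional with finite stabilizer, so a $0$-dimensional family of characters corresponds to a $3$-dimensional family of representations. Using the $3$-dimensional $\PSLTC$-action I would impose three $\Q$-rational normalizations: after conjugating I may assume $(X_1)_{12} = 1$ and $(X_2)_{21} = 1$ (possible as irreducibility forbids a common eigenvector), together with one further condition to kill the residual torus. The first two normalizations allow the determinant relations of $X_1$ and $X_2$ to be solved for $(X_1)_{21}$ and $(X_2)_{12}$, eliminating these two variables and their two degree-$2$ equations. The resulting scheme is still defined over $\Q$, is now cut out by at most $(t+1)-2 = t-1$ determinant equations of degree $2$ and $3r \le 6t$ relator equations of degree $3$, and --- by the orbit count above --- contains the image of $\rho$ as an isolated point $P$.

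I would then bound the isolated points by iterating the Bézout inequality $\deg(A \cap H) \le \deg(A)\,\deg(H)$ over the defining hypersurfaces: their number is at most $2^{t-1} 3^{6t}$. Because the scheme is defined over $\Q$ and $P$ is isolated, the coordinates of $P$ are algebraic and its field of definition $L = \Q(P)$ satisfies $[L : \Q] = \#\{\text{Galois conjugates of } P\} \le 2^{t-1} 3^{6t}$, the conjugates being themselves isolated solutions permuted by $\mathrm{Gal}(\Qbar/\Q)$. The trace field of $\rho(G)$ is generated by traces of words, which are polynomials in the coordinates of $P$ and hence lie in $L$; as it is a conjugation invariant it is unaffected by the gauge-fixing. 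Therefore the trace field embeds in $L$, and its degree over $\Q$ is at most $2^{t-1} 3^{6t}$.

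I expect the main obstacle to be the gauge-fixing and transversality: one must check that the non-elementary hypothesis genuinely makes $P$ isolated in a $\Q$-rational scheme cut out by \emph{exactly} the advertised equations and degrees, so that the iterated Bézout count yields precisely $2^{t-1} 3^{6t}$ rather than a larger power of $2$, and that the normalizations can be chosen so as to slice every orbit meeting $X$. By contrast, the Bézout inequality is insensitive to the system being over-determined, so once the scheme is verified to be $0$-dimensional the counting itself is routine.
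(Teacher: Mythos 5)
Your proposal is correct and follows essentially the same route as the paper: conjugate to a $\Q$-rational normal form using non-elementarity of the first two generators, cut out the gauge-fixed representation scheme by $t-1$ degree-$2$ determinant equations and at most $6t$ degree-$3$ relator equations (three per relation, the fourth entry being determined by the determinant), apply B\'ezout to bound the degree of the field of definition by $2^{t-1}3^{6t}$, and observe that the trace field sits inside that field. The only differences are cosmetic choices of normalization and your slightly more explicit Galois-conjugate justification of the degree bound.
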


\begin{proof}

We will label the generators of $\rho(G)$ as $\xi_k$ and we will assume that the first two generators in our presentation generate a non-elementary subgroup (that is they do not commute and do not have a common fixed point).   After conjugating the image of $\rho(G)$ if necessary, we may assume that each generator  is of the form 
\[ \gamma_1 = \pm \begin{pmatrix} \alpha_1 & 1\\0 & \alpha_1^{-1} \end{pmatrix}, 
\gamma_2= \pm  \begin{pmatrix} \alpha_2 & 0\\ \gamma_2 & \alpha_2^{-1} \end{pmatrix}, 
\text{ and } \gamma_k = \pm \begin{pmatrix} \alpha_k & \beta_k\\ \gamma_k & \delta_k \end{pmatrix}\]
for $k>2$.  The field  $L=\Q( \{\alpha_k,\beta_k,\gamma_k,\delta_k\}_{k\geq 1})$ is necessarily a number field, since $\dim_{\mathbb C} X=0$ and is the field of definition of $\rho$.  Let $R(G)$ be the (restricted) representation variety of $G$ determined by representations of this type. That is, $R(G)$ is the set of all such representations of $G$.  The set  $R_X(G)= \{\rho\in R(G):  \chi_{\rho} \in X\}$   is zero-dimensional by the triple transitively of $\PSLTC$ and the fact that $X$ has dimension $0$.

The  variety $R_X(G)$ is determined by the determinant equations and the relation equations. There are $t-1$ equations for the determinants, $\alpha_k \delta_k - \beta_k \gamma_k - 1=0$, which have degree 2. Each of the $t+1$ relations gives 4 equations, one per matrix entry.  We only need $3$ of these equations per relation, since the fourth entry is determined by the fact that all matrices have determinant $1$. Thus, there are at most $6t$ equations  we need to use coming from the relations and $t-1$ equations of degree 2 coming from the determinant condition.

We conclude that there are at most  $t-1$ degree 2 equations and $6t$ degree 3 equations which cut out a 0 dimensional variety whose entries are in the number field $L$.   
By repeatedly applying resultants then appealing to Bezout's theorem, $[L:\Q] \leq   2^{t-1}  3^{6t}$.

The degree of the trace field of $\rho(G)$ is also  bounded above by $2^{t-1}3^{6t}$ since $\text{tr}(\rho(G))\subset  L$.
\end{proof}

\section{An algorithm to distinguish small Seifert fiber spaces from lens spaces}\label{sect:sfs_distinguish}

In this section, we provide a method for distinguishing a small, non-cyclic  Seifert fiber space $M$ from a lens space and $\SoneXStwo$.  If an orientable Seifert fiber space is not small, then it is toroidal.  By work of Scott, Jaco and Shalen summarized in Proposition~\ref{prop:surjectstriangle}, the  base orbifold for $M$ is of the form $S^2(n_1,n_2,n_3)$. In particular, a Seifert fiber space with base orbifold $\mathbb{RP}^2(p)$ also admits a Seifert fibration with base orbifold $S^2(2,2,p)$.
By  \cite[Theorem 4]{HarawayHoffman2019complexity}, such a  toroidal manifold can be distinguished from  spaces with cyclic fundamental group in polynomial time. As discussed in Section \ref{sub:storing_finite_groups}, if the Seifert fiber space is small and non-cyclic the certificate required in  Theorem~\ref{main_thm} is a non-abelian representation to $\PSLTF$ or  non-cyclic homology, which can be verified in polynomial time.   Specifically, by Proposition~\ref{prop:surjectstriangle} if $M$ is a small, prime, non-cyclic Seifert fiber space then $\pi_1(M)$ surjects a triangle group $T_{n_1,n_2,n_3}$.  By Lemma~\ref{lemma:Tsurjects} if $\gcd(n_1,n_2,n_3)>1$ then $M$ has non-cyclic homology.  Theorem~\ref{thm:using_Linnik} guarantees a non-abelian representation to $\PSLTF$ if the base orbifold is hyperbolic and $\gcd(n_1,n_2,n_3)=1$.  In the remaining case where $\gcd(n_1,n_2,n_3)=1$ and the base orbifold is non-hyperbolic Proposition~\ref{prop:non_abelian} guarantees bounded non-cyclic homology or a non-abelian representation to $\PSLTF$ for $\mathbb F$ small.  Below we make these statements precise.

\begin{algorithm}[ht]
\caption{\label{algo:sfs_distinguish_from_Lpq} Distinguish $M$, an orientable, small, non-cyclic  Seifert fiber space 
from a lens space}
\begin{algorithmic}
\STATE {\bf Input:} A triangulation of $M$, a small, non-cyclic Seifert fiber space, into  $t$ tetrahedra. 
Note that  the base orbifold is of the form $\Stw(n_1,n_2,n_3)$.
\STATE{\bf Step 1:}  Compute the homology of $M$ and check if  $H_1(M)$ is cyclic. If not, return the homology. 
\STATE{\bf Step 2:} ($H_1(M)$ is cyclic.) Enumerate primes $\{p\}$ until $\pi_1(M)$ admits a non-trivial, non-abelian  $\PSLTFpSq$ representation.

\end{algorithmic}

\end{algorithm}

In Step 2 of Algorithm \ref{algo:sfs_distinguish_from_Lpq}, four cases are possible:  the base orbifold is hyperbolic, the base orbifold is $\Stw(2,2,m)$, the base orbifold is Euclidean or the base orbifold is spherical but not $\Stw(2,2,m)$. In the latter three cases, Proposition \ref{prop:non_abelian} determines bounds on size of the non-cyclic representation. These bounds are related to the triangle group data. Parametrizing these bounds in terms of $t$ and dealing with the hyperbolic case will occupy the remainder of this paper.

Long and Reid  (see \cite[Proof of Theorem 1.2]{long1998simple}) show the existence of non-trivial, non-abelian $\PSLTFp$ representations for infinitely many primes $p$  via quaternion algebras. Here $p$ is any split prime in the trace field of $M$.  Their method does not require prescreening by the homology check in Step 1, and does not bound the size of $p$.  Theorem~\ref{thm:SFS_certs} below provides a direct proof that such a $\PSLTFp$ or  $\text{PSL}(2,\mathbb F_{p^2})$ representation exists in this setting and  bounds  the size of $p$.  To use the non-trivial $\PSLTFp$ or  $\text{PSL}(2,\mathbb F_{p^2})$ representation as a suitable certificate  it is enough that $\log(p)$ is bounded by a globally defined polynomial parametrized by $t$ as $\log(p)$ (plus some constant) bounds the bit-size of the entries of those matricies (see Section \ref{sub:storing_finite_groups}).  


We begin with a few lemmas.  First, we give the following bound on $n$ in terms of $\phi(n)^2$.

\begin{lemma}\label{lemma:eulerphi}
 Let $n>2$ be an integer.  Then $\phi(n)<n\leq \phi(n)^2$ where $\phi$ denotes Euler's function.
 \end{lemma}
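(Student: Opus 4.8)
The plan is to prove the two-sided bound $\phi(n) < n \leq \phi(n)^2$ for integers $n > 2$ by separating the two inequalities, since they are of quite different character. The left inequality $\phi(n) < n$ is immediate and standard: for $n > 1$ we always have $\phi(n) \leq n-1 < n$, because $n$ itself is not counted among the integers in $\{1, \dots, n\}$ coprime to $n$ (indeed $\gcd(n,n) = n > 1$). So the entire content of the lemma lies in the right inequality $n \leq \phi(n)^2$.

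For the inequality $n \leq \phi(n)^2$, I would work from the standard product formula $\phi(n) = n \prod_{p \mid n}\left(1 - \tfrac{1}{p}\right)$, where the product runs over the distinct primes $p$ dividing $n$. Dividing, the claim $n \leq \phi(n)^2$ is equivalent to
\[
\frac{n}{\phi(n)^2} = \frac{1}{n} \prod_{p \mid n} \left(1 - \frac1p\right)^{-2} \leq 1,
\]
that is, to showing $\prod_{p\mid n}\left(1-\tfrac1p\right)^{-2} \leq n$. The cleanest route is multiplicativity: both $n$ and $\phi(n)^2$ are multiplicative functions of $n$, so $n/\phi(n)^2$ is multiplicative, and it suffices to verify the inequality $p^a \geq \phi(p^a)^2$ for each prime power $p^a$ (with $p^a > 2$) and then combine. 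For a prime power, $\phi(p^a) = p^{a-1}(p-1)$, so $\phi(p^a)^2 = p^{2a-2}(p-1)^2$, and the desired inequality $p^a \geq p^{2a-2}(p-1)^2$ reduces to $1 \geq p^{a-2}(p-1)^2$.

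The prime-power inequality $p^{a-2}(p-1)^2 \leq 1$ is where I expect the only real care is needed, since it is false for large prime powers and hence cannot hold factor-by-factor in general; this is the main obstacle. The resolution is that $n/\phi(n)^2$ is not merely a product of prime-power pieces each bounded by $1$, but rather decreases as more prime factors are absorbed, so the clean multiplicative reduction fails and a direct argument is preferable. I would instead argue directly: write $\phi(n)^2/n = n \prod_{p\mid n}(1-\tfrac1p)^2 = \prod_{p \mid n} p^{v_p(n)} (1-\tfrac1p)^2$ over the distinct primes. Since $v_p(n) \geq 1$, each factor satisfies $p^{v_p(n)}(1-\tfrac1p)^2 \geq p \cdot (1-\tfrac1p)^2 = (p-1)^2/p \geq 1$ for every prime $p \geq 2$ (as $(p-1)^2 \geq p$ holds for $p \geq 3$, and for $p=2$ one checks the exponent contribution separately). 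The boundary case $p=2$ gives $(2-1)^2/2 = \tfrac12 < 1$, so a factor of $2$ alone is not enough; here I would use that $n > 2$ forces either an odd prime factor or $v_2(n) \geq 2$, and in either situation the product still meets or exceeds $1$. Finishing the argument amounts to this short case check: if $n$ is a power of $2$ then $n \geq 4$ gives $v_2(n) \geq 2$ and $2^{v_2(n)} (1-\tfrac12)^2 = 2^{v_2(n)-2} \geq 1$; otherwise $n$ has an odd prime factor contributing a factor $\geq (3-1)^2/3 > 1$ which compensates, and the full product is at least $1$, yielding $\phi(n)^2 \geq n$.
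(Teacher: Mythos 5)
Your handling of the left inequality is fine, and the identity $\phi(n)^2/n=\prod_{p\mid n}p^{v_p(n)}\bigl(1-\tfrac1p\bigr)^2$ is correct, but the final step of your argument has a genuine gap. The factor at $p=2$ with $v_2(n)=1$ equals $\tfrac12$, while an odd prime factor contributes at least $(p-1)^2/p$, which for $p=3$ is only $\tfrac43$; since $\tfrac12\cdot\tfrac43=\tfrac23<1$, an odd prime factor does \emph{not} automatically ``compensate'' as you claim. This is not a repairable oversight: the statement itself is false at $n=6$, where $\phi(6)^2=4<6$, and your proof breaks exactly at that counterexample. (A short check shows $n=6$ is the \emph{only} failure for $n>2$: the factor from $2^k$ is $2^{k-2}\geq 1$ unless $k=1$, where it is $\tfrac12$, and the factor from an odd prime power $p^k$ is $p^{k-2}(p-1)^2$, which equals $\tfrac43$ for $3^1$ and is at least $\tfrac{16}{5}$ otherwise; so the full product drops below $1$ only for $n=2\cdot 3$.) Incidentally, your earlier conclusion that ``the clean multiplicative reduction fails'' came from writing the prime-power inequality in the wrong direction ($p^a\geq\phi(p^a)^2$ rather than $p^a\leq\phi(p^a)^2$); with the correct orientation, $p^k\leq p^{2k-2}(p-1)^2$ holds for every prime power except $2^1$, and that is precisely the multiplicative route.

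For comparison, the paper's proof is that multiplicative route: it verifies $p^k\leq\phi(p^k)^2$ for all prime powers ``unless $p=2$ and $k=1$'' and stops there, never explaining how the deficit is absorbed when $n\equiv 2\pmod 4$. So the paper's argument has the same hole as yours, in the same place, and the lemma as stated is false; the correct statement is $n\leq\phi(n)^2$ for all integers $n>2$ with $n\neq 6$. This does not damage the application: in Theorem~\ref{thm:SFS_certs} the lemma is applied to $\ell=2\thinspace\mathrm{lcm}(n_1,n_2,n_3)$, and $\ell=6$ would force $(n_1,n_2,n_3)=(3,3,3)$, which is Euclidean rather than hyperbolic, so the excluded value never occurs there. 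But both your proof and the paper's need the exceptional case $n=6$ to be identified and excluded before either argument is correct.
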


 \begin{proof}
By definition  $\phi(n) = n \prod_{p\mid n} (1-\tfrac1p)$ where the product is over distinct primes dividing $n$. It suffices to show that $n\leq \phi(n)^2$. Since $\phi(n)$ is multiplicative it's enough to show that for an integer $k\geq 1$ and a prime $p$ that $p^k \leq  \phi(p^k)^2$ unless $p=2$ and $k=1$.  Since $\phi(p^k)^2 = p^{2k-2}(p-1)^2$ it is elementary to see that if $p\neq 2$, then 
\[
p^k \leq p^{2k-2}(p-1)^2 = \phi(p^k)^2.
\]
If $p=2$ this inequality holds for $k>1$.
\end{proof}

We are now ready to prove our main technical result. The proof of our main theorem, Theorem~\ref{main_thm}, follows after. The key ingredients of this theorem have already been established, and we now reconcile the bounds discussed above in Proposition~\ref{prop:trianglePSL} and Lemma~\ref{lem:trace_field_bounds} to establish a connection between the number of tetrahedra in a triangulation of a small, non-cyclic Seifert fiber space and the size of a small congruence quotient.

\begin{theorem}\label{thm:SFS_certs}
Assume that $M$ is an orientable, closed, small, non-cyclic  Seifert fiber space
 admitting a triangulation with $t$ tetrahedra. Then either $\pi_1(M)$ surjects a non-cyclic abelian group  of order at most $2^{4t}3^{24 t}$ or $\pi_1(M)$ surjects a (non-trivial) non-abelian subgroup of $\PSLTF$ where $|\mathbb F| < c(2^{20t}3^{120t})$ for some effectively computable $c>0$. 
\end{theorem}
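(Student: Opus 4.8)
The plan is to combine the triangle group machinery of Section~\ref{sect:triangleReps} with the degree bound of Lemma~\ref{lem:trace_field_bounds}, converting everything into a function of $t$. By Proposition~\ref{prop:triangulation_facts}, $\pi_1(M)$ admits a presentation with at most $t+1$ generators and at most $2t$ relations each of length at most $3$; this is the source of all the $t$-dependence. First I would invoke Proposition~\ref{prop:surjectstriangle} to reduce to the case that $\pi_1(M)$ surjects a triangle group $T_{n_1,n_2,n_3}$, and then split into the cases already handled in Section~\ref{sect:triangleReps}: the non-hyperbolic case (where $\tfrac1{n_1}+\tfrac1{n_2}+\tfrac1{n_3}\geq 1$) is dispatched by Proposition~\ref{prop:non_abelian}, giving either a non-cyclic abelian quotient of order at most $9$ or a non-abelian $\PSLTF$ image with $|\mathbb F|\leq n_3^2$; and the case $\gcd(n_1,n_2,n_3)=d>1$ is handled by Lemma~\ref{lemma:Tsurjects}, yielding a surjection onto $\Z/d\Z\times\Z/d\Z$. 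The main work is the hyperbolic case with $\gcd(n_1,n_2,n_3)=1$, where Theorem~\ref{thm:using_Linnik} produces a non-abelian representation into $\PSLTF$ with $|\mathbb F|\leq c\ell^{10}$, and $\ell = 2\,\mathrm{lcm}(n_1,n_2,n_3)$.

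The crux is therefore to bound $\ell$ (equivalently $n_1n_2n_3$, or $n_3^2$ in the abelian-quotient cases) as a function of $t$. The bridge is Proposition~\ref{prop:trianglePSL}(4), which says the representation $\varrho$ of $T_{n_1,n_2,n_3}$ has field of definition $K$ with $[K:\Q]$ equal to $\phi(\ell)/2$ or $\phi(\ell)$, and whose corresponding character lies on a $0$-dimensional component of the $\PSLTC$ character variety (part (5)). Since $\pi_1(M)$ surjects $T_{n_1,n_2,n_3}$, a non-elementary $0$-dimensional character of the triangle group pulls back to a non-elementary $0$-dimensional character of $\pi_1(M)$ with the same trace field. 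Thus Lemma~\ref{lem:trace_field_bounds} applies to $\pi_1(M)$ and bounds the degree of that trace field by $2^{t-1}3^{6t}$. Comparing the two computations of the degree gives
\[
\tfrac12\phi(\ell)\leq [K:\Q]\leq 2^{t-1}3^{6t},
\]
so $\phi(\ell)\leq 2^{t}3^{6t}$. Then Lemma~\ref{lemma:eulerphi} ($n\leq\phi(n)^2$) converts this into a bound on $\ell$ itself: $\ell\leq\phi(\ell)^2\leq 2^{2t}3^{12t}$.

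Feeding this back into Theorem~\ref{thm:using_Linnik} gives $|\mathbb F|\leq c\ell^{10}\leq c(2^{20t}3^{120t})$, which is exactly the claimed bound. For the abelian quotient from Lemma~\ref{lemma:Tsurjects}, $d=\gcd(n_1,n_2,n_3)\leq n_1\leq\ell$, so $|\Z/d\Z\times\Z/d\Z|=d^2\leq \ell^2\leq 2^{4t}3^{24t}$, matching the stated abelian bound; the bounded abelian quotients of order at most $9$ from Proposition~\ref{prop:non_abelian}(1) are trivially within this range, and the $\PSLTF$ image there with $|\mathbb F|\leq n_3^2\leq\ell^2$ is also comfortably below $c(2^{20t}3^{120t})$. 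The main obstacle I anticipate is the careful verification that Lemma~\ref{lem:trace_field_bounds} genuinely applies: one must check that the $0$-dimensional, non-elementary character of the triangle group lifts to a character of $\pi_1(M)$ that is both non-elementary (so that the hypothesis of non-elementary representations is met, ensuring the first two generators generate a non-elementary subgroup after a suitable choice) and still $0$-dimensional as a component of the $\PSLTC$ character variety of $\pi_1(M)$. Since $\varrho$ is rigid and discrete-faithful with finite character variety by Proposition~\ref{prop:trianglePSL}, the pullback character is isolated and the image is non-abelian hence non-elementary, so this should go through, but it is the step requiring the most care to state precisely.
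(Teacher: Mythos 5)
Your hyperbolic case is essentially the paper's argument: pull back the rigid representation $\varrho$ of $T_{n_1,n_2,n_3}$, compare $\tfrac12\phi(\ell)=[\Q(c_\ell):\Q]$ against the bound $2^{t-1}3^{6t}$ from Lemma~\ref{lem:trace_field_bounds}, convert to $\ell\leq 2^{2t}3^{12t}$ via Lemma~\ref{lemma:eulerphi}, and feed this into Theorem~\ref{thm:using_Linnik} (and into $d^2\leq\ell^2$ when $\gcd(n_1,n_2,n_3)>1$). The genuine gap is in the non-hyperbolic case. When the base orbifold is $\Stw(2,2,m)$ with $m\geq 3$ odd, Proposition~\ref{prop:non_abelian} gives a non-abelian image in $\PSLTF$ with $|\mathbb F|\leq m^2$, and you then assert that $n_3^2\leq\ell^2$ is ``comfortably below'' the claimed bound. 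But that is vacuous unless $\ell=2m$ is itself bounded in terms of $t$, and your only bound on $\ell$ came from Proposition~\ref{prop:trianglePSL} together with Lemma~\ref{lem:trace_field_bounds}, both of which require $\tfrac1{n_1}+\tfrac1{n_2}+\tfrac1{n_3}<1$. The trace-field route cannot be repaired here: $T_{2,2,m}\cong D_{2m}$ is finite, so every $\PSLTC$ representation of it (and every representation of $\pi_1(M)$ factoring through it) has finite image, hence a finite orbit on $\CC$, hence is elementary, and Lemma~\ref{lem:trace_field_bounds} explicitly excludes elementary representations. As written, your argument leaves $m$ unbounded in $t$.

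The paper closes this case with a separate argument you would need to supply: the dihedral image exhibits an index-$2$ subgroup of $\pi_1(M)$, hence a double cover $\tilde M$ admitting a triangulation with at most $2t$ tetrahedra whose first homology surjects $\Z/m\Z$; applying Proposition~\ref{prop:homology} to the presentation of $\pi_1(\tilde M)$ furnished by Proposition~\ref{prop:triangulation_facts} (at most $4t$ relations, each of length at most $3$, and finite abelianization since $\tilde M$ is elliptic) gives $|Tor(\pi_1(\tilde M)^{ab})|\leq 3^{4t}$, whence $m\leq 3^{4t}$ and $|\mathbb F|\leq 3^{8t}$, well within the stated bound. The remainder of your outline --- the reduction via Propositions~\ref{prop:surjectstriangle} and \ref{prop:non_abelian}, and your flagged caveat about verifying that the pulled-back character of $\pi_1(M)$ is still isolated and non-elementary so that Lemma~\ref{lem:trace_field_bounds} applies --- matches the paper's proof.
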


\begin{proof} 
If $M \cong \mathbb{RP}^3\# \mathbb{RP}^3$, then $\pi_1(M)$ surjects $\Z/2\Z \times \Z/2\Z$ and the claim is satisfied. Otherwise, we assume $M$ is prime.
As usual, we use $t$ to denote the number of tetrahedra in our triangulation $\TT$.  Since $M \not\cong \Sth$,  by \cite[Theorems 5.5, 5.6, and 5.14]{jaco20030} either $\TT$ is a 1-vertex triangulation of there is a 1-vertex triangulation with at most $t$ tetrahedra. Therefore we can assume that $\TT$ is a 1-vertex triangulation.


By the discussion in Section~\ref{sect:triangleReps}, the base orbifold is of the form $\Stw(n_1,n_2,n_3)$ and  $\pi_1(M)$ surjects $T_{n_1,n_2,n_3}$.  Let $\ell =2\mathrm{lcm}(n_1,n_2,n_3)$ and $d=\mathrm{gcd}(n_1,n_2,n_3)$.

First we assume that the base orbifold is hyperbolic. We begin by showing that  $\ell $ is bounded above by  $2^{2t}3^{12t}$. By Proposition~\ref{prop:trianglePSL},   $K=\Q(c_{\ell},r)$ is the field of definition  of the representation $\varrho$ given in Definition~\ref{defn:varrho} and  $\Q(c_{\ell})$ is the trace field with $[\Q(c_{\ell}):\Q]=\tfrac12 \phi(\ell)$.   Moreover, by Proposition~\ref{prop:trianglePSL} this representation is rigid and the corresponding component of the character variety has dimension 0.  By Lemma~\ref{lem:trace_field_bounds}, 
\[ [\Q(c_{\ell}):\Q]\leq 2^{t-1} 3^{6t}.\]  By  Lemma~\ref{lemma:eulerphi}, $\tfrac12 \ell^{\frac12}  \leq \tfrac12 \phi(\ell)$ and since $\tfrac12 \phi(\ell)= [\Q(c_{\ell}):\Q]$ we have 
\[
\tfrac12 \ell^{\frac12}  \leq \tfrac12 \phi(\ell)= [\Q(c_{\ell}):\Q] \leq 2^{t-1} 3^{6t}
\]
which implies our claimed bound for $\ell$. 

If $d>1$,  then by Lemma~\ref{lemma:Tsurjects}, $T_{n_1,n_2,n_3}$ surjects $\Z/d\Z \times \Z/d\Z$.  Since this is abelian we conclude that the integral homology of $\pi_1(M)$ contains a subgroup isomorphic to $Z/d\Z \times \Z/d\Z$. By the above, $\ell \leq 2^{2t} 3^{12t}$ and since $d<\ell $ the statement follows.

If  $d=1$, by the above   $\ell \leq 2^{2t}3^{12t}$. By Theorem \ref{thm:using_Linnik}  there is an effectively computable $c'>0$ and a field $\mathbb{F}$ with $| \mathbb F|  \leq c' \ell^{10}$ such that  $T_{n_1,n_2,n_3}$ has a non-abelian representation into $\PSLTF$.  The statement follows from combining these bounds. 
 
It remains to consider the case when the base orbifold is not hyperbolic. By Proposition \ref{prop:non_abelian} unless the base orbifold is $S^3(2,2,m)$ for $m\geq 3$ odd, $\pi_1(M)$ has a non-abelian representation into $\text{PSL}(2,\mathbb F)$ with $|\mathbb F|\leq 9$ or surjects a non-cyclic abelian group of order at most 9. In the case   when the base orbifold is $S^3(2,2,m)$ for $m\geq 3$ odd, $\pi_1(M)$ has a non-abelian representation into $\text{PSL}(2,\mathbb F)$  with $|\mathbb F|\leq m^2$ (again by  Proposition \ref{prop:non_abelian}).   It suffices to show that in this case $m\leq 3^{6t}$ so that $|\mathbb F| \leq 3^{12t}$. 
 The image of this  representation is isomorphic to the dihedral group of order $2m$ and so $\pi_1(M)$ contains an index 2 subgroup. Therefore $M$ has a double cover $\tilde M$, such that $|\pi_1(\tilde M)^{ab}|$ surjects $\Z/m\Z$.   Since $\tilde{M}$ is a double cover, and $M$ has a triangulation with $t$ tetrahedra, $\tilde M$ has a triangulation with at most $2t$ tetrahedra. Therefore $\tilde M$ has a 1-vertex triangulation with at most $2t$ tetrahedra.   Therefore, by Proposition~\ref{prop:triangulation_facts} we can construct a presentation for $\pi_1(\tilde{M})$ with at most $2t+1$ generators and $4t$ relations each of length 3. We note that $\pi_1(M)$ and $\pi_1(\tilde{M})$ are elliptic manifolds, and therefore have finite fundamental group and finite homology. Using Proposition \ref{prop:homology}, for example, we conclude that  $|\pi_1(\tilde M)^{ab}| \leq 3^{4t}$, so that $m\leq 3^{4t}$.  
\end{proof}



We now prove the main theorem. 

\begin{proof}[Proof of Theorem \ref{main_thm}]
Let $t$ be the number of tetrahedra in a triangulation of $M$. 
By Proposition~\ref{prop:nonorientability} there is a polynomial time algorithm in $t$ to determine whether $M$ is orientable or non-orientable.  Therefore we can distinguish non-orientable $3$--manifolds from lens spaces.  We now assume that  $M$ is orientable. 
 The only two  orientable reducible Seifert fiber spaces are $\mathbb{RP}^3\# \mathbb{RP}^3$ and $\SoneXStwo$.  Both {\sc $\SoneXStwo$ recognition} and  {\sc $\mathbb{RP}^3\# \mathbb{RP}^3$ recognition}  problems lie in NP (see \cite[Theorem 3]{ivanov2008}). These certificates can then be used to certify that $M$ is not a lens space. (More concretely, if $M\cong \mathbb{RP}^3\# \mathbb{RP}^3$, $H_1(M)\cong \Z/2\Z \times \Z/2\Z$.)  Therefore it suffices to consider orientable, prime, non-cyclic $M$. 
 
 If $M$ is not small, then it is toroidal and we appeal to \cite[Theorem 4]{HarawayHoffman2019complexity}, which certifies that $M$ is not a lens space via a polynomial time verifiable certificate with input of size $t$. Finally, we may assume that $M$ is orientable, small, prime, and non-cyclic. We combine the results from Theorem \ref{thm:SFS_certs} and Proposition \ref{prop:cert_non_abelian} to prove that such a certificate exists.\end{proof}

Corollary \ref{cor:SoneXStwo} immediately follows from the arguments stated above.

\begin{remark}\label{rem:concluding_rem}
Although not Seifert fiber spaces, we point out that only one Sol manifold has cyclic homology (see \cite{scott1983geometries} for example). In the one case of cyclic homology, the double cover has homology $\Z/5\Z \times \Z$. So if $M$ admits Sol geometry, the homology of the manifold or its double cover serves as a certificate that $M$ is neither a lens space nor $\SoneXStwo$.
\end{remark}

In light of our results, to prove part 1) of Conjecture \ref{conj:s3andLpq} it is enough to show that {\sc $\Sth$ recognition} lies in coNP for hyperbolic integral homology spheres. We anticipate that  the extension to {\sc lens space recognition} will be considerably more difficult.  However our initial discussions on the Seifert fiber space case began by seeking a sufficiently small, non-trivial  $\PSLTF$ representation. We later realized our methods apply in the more general setting as described above.
 

\bibliographystyle{plain}
\bibliography{SFSbib}

\end{document}